%% Based on a TeXnicCenter-Template by Gyorgy SZEIDL.
%%%%%%%%%%%%%%%%%%%%%%%%%%%%%%%%%%%%%%%%%%%%%%%%%%%%%%%%%%%%%

%------------------------------------------------------------
%
\documentclass[12pt]{amsart}
%
%----------------------------------------------------------
% This is a sample document for the AMS LaTeX Article Class
% Class options
%        -- Point size:  8pt, 9pt, 10pt (default), 11pt, 12pt
%        -- Paper size:  letterpaper(default), a4paper
%        -- Orientation: portrait(default), landscape
%        -- Print size:  oneside, twoside(default)
%        -- Quality:     final(default), draft
%        -- Title page:  notitlepage, titlepage(default)
%        -- Start chapter on left:
%                        openright(default), openany
%        -- Columns:     onecolumn(default), twocolumn
%        -- Omit extra math features:
%                        nomath
%        -- AMSfonts:    noamsfonts
%        -- PSAMSFonts  (fewer AMSfonts sizes):
%                        psamsfonts
%        -- Equation numbering:
%                        leqno(default), reqno (equation numbers are on the right side)
%        -- Equation centering:
%                        centertags(default), tbtags
%        -- Displayed equations (centered is the default):
%                        fleqn (equations start at the same distance from the right side)
%        -- Electronic journal:
%                        e-only
%------------------------------------------------------------
% For instance the command
%          \documentclass[a4paper,12pt,reqno]{amsart}
% ensures that the paper size is a4, fonts are typeset at the size 12p
% and the equation numbers are on the right side
%
\usepackage[left=2cm,top=2cm,right=3cm,nofoot]{geometry}
\usepackage{amsmath,mathtools}%
\usepackage{amsfonts}%
\usepackage{amssymb}%
\usepackage{graphicx}
\usepackage{esint}
\usepackage{hyperref}
\usepackage{enumitem}
\usepackage{accents}
\usepackage{etoolbox}
\patchcmd{\subsection}{-.5em}{.5em}{}{}
%------------------------------------------------------------
% Theorem like environments
%
\newtheorem{theorem}{Theorem}[section]
\theoremstyle{plain}

\newtheorem{corollary}[theorem]{Corollary}

\newtheorem{lemma}[theorem]{Lemma}

\newtheorem{propo}[theorem]{Proposition}
\newtheorem{remark}[theorem]{Remark}

\numberwithin{equation}{section}
\theoremstyle{plain}

%\numberwithin{claim}{theorem} %% <-- This is another alternative if you like little difference.
\usepackage{etoolbox}
\AtEndEnvironment{proof}{\setcounter{claim}{0}}
\newcommand{\re}{\mathbb{R}}

\hyphenation{ma-ni-fold}
%--------------------------------------------------------
\begin{document}

\title[Nodal solutions to the Yamabe-type equations]{Nodal solutions of Yamabe- type equations on  positive Ricci curvature manifolds}

\author{Jurgen Julio-Batalla}
\address{Centro de Investigaci\'{o}n en Matem\'{a}ticas, CIMAT, Calle Jalisco s/n, 36023 Guanajuato, Guanajuato, M\'{e}xico}
\email{jurgen.julio@cimat.mx}

\author{Jimmy Petean}
\address{Centro de Investigaci\'{o}n en Matem\'{a}ticas, CIMAT, Calle Jalisco s/n, 36023 Guanajuato, Guanajuato, M\'{e}xico}
\email{jimmy@cimat.mx}

\begin{abstract} We consider a closed cohomogeneity one Riemannian manifold $(M^n,g) $ of dimension $n\geq 3$. If the Ricci curvature of $M$ is positive, we prove the existence of infinite nodal solutions for equations of the form   $-\Delta_g u + \lambda u = \lambda u^q$ with $\lambda >0$, $q>1$. In particular for a positive Einstein manifold which is of cohomogeneity one or 
fibers over a cohomogeniety one Einstein manifold we prove the existence of infinite nodal solutions for the Yamabe equation, with a prescribed number of connected components of its nodal domain.
\end{abstract}
\maketitle
\section{Introduction}

Let $(M^n ,g)$ be a closed Riemannian manifold of dimension $n\geq 3$. Denote by ${\bf s}_g$ the scalar curvature of
$g$. It is a classical problem, known as the {\it Yamabe problem}, to look for metrics of constant scalar curvature in
the conformal class of $g$. Given a metric $h$ conformal to $g$ we can write it as $h=u^{\frac{4}{n-2}} g$
(for a positive smooth function $u$ on $M$). Then the
scalar curvature of $h$ is given by ${\bf s}_h = u^{-\frac{n+2}{n-2}} (-a_n \Delta_g u + {\bf s}_g u )$, where $\Delta_g$ is
the Laplace operator and $a_n = \frac{4(n-1)}{n-2}$. Therefore ${\bf s}_h =\lambda \in \re$
if and only if $u$ is a (positive) solution of the {\it Yamabe equation}:
\begin{equation}\label{YE}
-a_n \Delta_g u + {\bf s}_g u = \lambda u^{\frac{n+2}{n-2}}.
\end{equation} 

\noindent
We let $p=p_n =\frac{n+2}{n-2}$. The exponent $p$ is the
critical Sobolev exponent. If replace $p$   by $q<p$ in (\ref{YE}) the equation is called {\it subcritical} and if we
replace it by $q>p$ it is called {\it supercritical}. It is a fundamental theorem that there is always a positive solution to the 
Yamabe equation (\ref{YE}).
This was  first stated by H. Yamabe in \cite{Yamabe}, but the proof contained a mistake that was fixed in several steps by N. Trudinger \cite{Trudinger}, T. Aubin \cite{Aubin},
and R. Schoen \cite{Schoen}. If the corresponding metric has non-positive constant scalar curvature (i.e. $\lambda \leq 0$)
it is easy to see that it is the unique solution of the problem. But if the metric has positive scalar curvature then there are in
general multiple solutions. It has been a fundamental problem to understand the space of positive solutions to the Yamabe 
equation in this case, and many multiplicity results have been obtained: see for instance \cite{Bettiol, Brendle, Piccione, Henry, Pollack, Schoen2}. Many of these multiplicity results 
are obtained on Riemannian products or more generally on certain Riemannian submersions by considering functions that are constant
along the fibers. In this case (\ref{YE}) reduces to a subcritical equation on the base. Supercritical equations are more difficult to
study and  have also been of great interest in analysis, see for instance \cite{CGM, CP, GMP}. 

\vspace{.3cm}

Many  results have been obtained recently about {\it nodal solutions}  of the equation (\ref{YE}). While it is known that
for any closed Riemannian manifold there is a positive solution of the Yamabe equation it is still not known if there is always also a nodal solution. But there are many results
in this direction. In particular, B. Ammann and E. Humbert  proved in \cite{Ammann} that under general situations (for instance
when the manifold is not locally conformally 
flat and its dimension is at least 11) there
is at least one nodal solution, which minimizes the second Yamabe invariant. 
In the particular case of the round sphere W. Ding constructed in \cite{Ding} infinite families of nodal solutions by using the symmetries of the sphere and variational methods. Similar results
on the round sphere were obtained in  \cite{JC} using ordinary differential equations  techniques similar to the ones we will use in this work. M. Clapp
and  J. C Fernandez in \cite{CF} also constructed infinite nodal solutions invariant under certain symmetries using variational techniques.  See also \cite{PMPP} and the references in these articles. 
An interesting family of examples are the positive Einstein metrics, since it is known by a theorem of M. Obata \cite{Obata} that a positive Einstein metric (different from the round metric on the sphere), is the only constant scalar curvature metric on its conformal class;  i. e. there is uniqueness of positive solutions to the Yamabe equation (and when the scalar curvature is positive this the only general situation where uniqueness 
is known to hold for positive solutions). 

\vspace{.3cm}

In this article we will  study nodal solutions of the  Yamabe-type equations 
\begin{equation}\label{YTE}
-\Delta_g u + \lambda u = \lambda u^q ,
\end{equation} 
where $\lambda \in \re_{>0}$ and $q>1$. If the scalar curvature of $g$, ${\bf s}_g$ is constant equal to $\frac{4(n-1)}{n-2} \lambda$ and
$q=p_n =  \frac{n+2}{n-2}$ is the critical Sobolev exponent, then (\ref{YTE}) is the Yamabe equation. And if $u$ is a 
positive solution then $u^{\frac{4}{n-2}} g$ also has constant scalar curvature ${\bf s}_g$. We will 
consider the case when
there is a subgroup $G$ of the isometry group of $g$ that acts with cohomogeneity one (i.e. the general orbits are hypersurfaces) and the Ricci curvature of $g$ is positive. In this situation we will prove the existence of nodal solutions of 
(\ref{YTE})  for any $\lambda >0$ and $q\in (1, p_G )$; here $p_G = \frac{n-m+2}{n-m-2}$, where $M$ is the dimension of $n$ and $m$ is the minimum of the
dimension of the orbits of $G$. Note that when $m>0$ we have that $p_G > p_n$ and (\ref{YTE}) is a supercritical equation. 

There are many examples in the  family of manifolds considered. If there is cohomogeneity one action by $G$ on 
the closed manifold $M$
the space of orbits $M/G$ could be $S^1$ or a closed interval. In the first case the fundamental group of $M$ is infinite
and $M$ cannot admit a metric of positive Ricci curvature,
but in the second case K. Grove and W. Ziller proved in \cite{Grove} the existence of an invariant metric of positive Ricci curvature. If $M/G = [0,1]$ and we call $\pi : M \rightarrow M/G$ the projection then there are two special orbits $M_1 = 
\pi^{-1} (0)$ and $M_2 = \pi^{-1} (1)$. We have that $m_i = dim (M_i ) < n-1$. If $m=\min \{ m_1 , m_2 \}$ then we
have called $p_G = \frac{n-m +2}{n-m -2}$; in particular if $m=n-2$ then $p_G = \infty$.  We will prove:

\begin{theorem} Let  $(M^n,g)$ be a closed Riemannian manifold of positive Ricci curvature. 
Assume that $G$ acts on $(M,g)$ with cohomogeneity one. If $q<p_G$ then for any $k\in\mathbb{N}$ there exists a solution $u_k$ of (\ref{YTE})  which is $G$-invariant and whose nodal set 
has exactly $k$ connected components. The nodal sets are $k$ copies of the regular fiber.
\end{theorem}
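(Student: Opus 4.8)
The plan is to use the cohomogeneity one symmetry to reduce \eqref{YTE} to a singular second order ODE, and then to produce the nodal solutions by a shooting argument in the initial amplitude. Parametrize the orbit space $M/G=[0,L]$ by the arclength $t$ of a normal geodesic meeting every orbit orthogonally, let $\pi\colon M\to[0,L]$ be the projection, and let $v(t)$ be the $(n-1)$--dimensional volume of the principal orbit $\pi^{-1}(t)$. A $G$--invariant function is a function $u=u(t)$, and for such functions $\Delta_g u=u''+H(t)\,u'$ with $H=(\log v)'$ the mean curvature of the principal orbit, so that \eqref{YTE} becomes
\begin{equation*}
u''+H(t)\,u'-\lambda u+\lambda|u|^{q-1}u=0,\qquad t\in(0,L).
\end{equation*}
Smoothness of the resulting function on $M$ at the two singular orbits $\pi^{-1}(0)$ and $\pi^{-1}(L)$ is equivalent to boundedness together with the Neumann conditions $u'(0)=u'(L)=0$. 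Since the nodal set of $u$ on $M$ is the union of the regular fibers over the interior zeros of $u(t)$, a solution of this boundary value problem with exactly $k$ transversal interior zeros has nodal set consisting of exactly $k$ copies of the regular fiber; hence it suffices to produce, for each $k$, such a solution.

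First I would record the only place where positive Ricci curvature enters. Along the normal geodesic the shape operator $S$ of the orbits obeys a Riccati equation; taking traces gives $H'=-|S|^2-\mathrm{Ric}(\dot\gamma,\dot\gamma)<0$. Thus $H$ is strictly decreasing from $+\infty$ at $0$ to $-\infty$ at $L$ and vanishes at a unique $t_0$, i.e. $v$ is strictly log--concave with a single maximum at $t_0$. Introducing the energy $E(t)=\tfrac12(u')^2+\lambda\bigl(\tfrac{|u|^{q+1}}{q+1}-\tfrac{u^2}{2}\bigr)$, one computes $E'=-H\,(u')^2$, so $E$ is strictly decreasing on $(0,t_0)$ and strictly increasing on $(t_0,L)$. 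This clean ``dissipation then anti--dissipation'' structure, with a single switch at $t_0$, is what drives the shooting.

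Next I would set up the shooting. For $d>0$ let $u_d$ be the solution with $u_d(0)=d$, $u_d'(0)=0$; since $t=0$ is a regular singular point with $H(t)\sim (n-1-m_1)/t$, this data selects the unique bounded branch, and $u_d$ is defined and bounded on $[0,t_0]$. The potential $F(u)=\lambda(\tfrac{|u|^{q+1}}{q+1}-\tfrac{u^2}{2})$ is a coercive double well with wells at $\pm1$, so on the interior the solution cannot escape to infinity and oscillates about $0$. I would then count interior zeros as a function of $d$: for $d$ small, $u_d\approx d\,w$ with $w$ the positive solution of the linearization $w''+Hw'-\lambda w=0$, which has no interior zero (indeed $vw'$ is increasing and vanishes at $0$), so $N(d)=0$; for $d$ large the initial energy $F(d)$ is large and the super--quadratic potential forces a short oscillation period, so $N(d)\to\infty$. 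Continuity of $u_d$ in $d$ and transversality of the zeros then let an intermediate value argument realize every $k\in\mathbb{N}$.

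The main obstacle is the right endpoint. At $t=L$ one has $H(t)\sim -(n-1-m_2)/(L-t)$, so $t=L$ is a singular point of effective dimension $n-m_2$, at which generic solutions blow up like $(L-t)^{-2/(q-1)}$ and only a distinguished branch is regular (bounded, with $u'(L)=0$). The hypothesis $q<p_G=\frac{n-m+2}{n-m-2}$, with $m=\min\{m_1,m_2\}$, is precisely subcriticality relative to the larger effective dimension $n-m$ at the two ends, and is exactly what makes this endpoint analysis work: I would show that as $d$ varies the sign of the blow--up at $L$ changes, that a regular solution occurs at each crossing value of $d$, and that the interior zero count is locally constant on the blow--up intervals and jumps by exactly one across each regular value. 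Combining this with the count above yields, for every $k$, a solution regular at both singular orbits whose nodal set is $k$ copies of the regular fiber. An alternative to this one--sided analysis is to shoot also from $L$, obtaining a second regular one--parameter family, and to match the two families at $t_0$ (where $H=0$), the energy monotonicity of the second step organizing the phase--plane picture so that the matching produces each prescribed number of zeros. Controlling the zero count across the regular values while preserving regularity at $L$ is the delicate point, and it is here that $q<p_G$ is essential.
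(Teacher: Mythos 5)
Your reduction to the singular ODE, the Riccati computation giving $H'=-|S|^2-\mathrm{Ric}(\dot\gamma,\dot\gamma)<0$, and the energy monotonicity around the unique zero $t_0$ of $H$ all match the paper (Section 2 and Proposition 2.1). The gap is in the existence argument. Your primary route --- shooting only from $t=0$ and analyzing blow-up at the far singular orbit --- rests on claims that are asserted rather than proved and do not follow from anything you set up: that the values of $d$ for which $u_d$ is regular at $t=L$ form a discrete set, that the interior zero count is locally constant on the blow-up intervals, and that it jumps by exactly one across each regular value. Controlling forward integration into the anti-damped singularity at $L$ (where the energy is increasing and $H\to-\infty$) is precisely the hard part of the problem, and no mechanism for it is offered; note also that the generic singular behavior need not be $(L-t)^{-2/(q-1)}$, since the linear analysis at $L$ produces a singular branch growing like $(L-t)^{-(n-m_2-2)}$ when $m_2<n-2$, so even the dichotomy you describe requires justification. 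Moreover, the hypothesis $q<p_G$ enters the paper not through endpoint regularity but through the oscillation result \cite[Theorem 3.1]{JC}: for large initial data the solutions acquire arbitrarily many zeros arbitrarily close to either singular orbit, which is what drives the winding numbers to infinity.

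The alternative you mention in a single sentence --- shooting from both ends and matching at $t_0$ --- is the paper's actual strategy, but everything difficult happens inside that sentence. The paper encodes the two families by the phase-plane curves $I(\alpha)=(u_\alpha(t_0),u_\alpha'(t_0))$ and $F(\beta)$, introduces argument (winding) functions ${\bf a}$, ${\bf b}$ with ${\bf a}\to-\infty$ and ${\bf b}\to+\infty$, and then must confront an issue your proposal never mentions, which is the main new point compared with the round sphere case of \cite{JC}: for a general metric of positive Ricci curvature, equation (\ref{ODE}) may have non-constant positive solutions (this already happens on products of round spheres), so the two curves can intersect and the naive winding count breaks down. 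The paper's remedy --- truncating $R$ and $S$ past their last intersection so that $T=R^*\cup S^*$ is a simple curve separating the half-plane into regions $U$ and $D$, proving that the exit levels $C_k$ are monotone, and splitting into cases (if $C_k$ or $C_k^*$ is decreasing one produces solutions with all $k$ zeros on one side of $t_0$; if both increase, the translate $S^*-(k\pi,0)$ is forced to cross $R^*$, giving exactly $k$ zeros by the zero-counting lemma) --- is the core of the proof, and nothing in your proposal substitutes for it. So the proposal is a reasonable plan whose decisive steps are missing.
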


In particular this result applies to prove the existence of nodal solutions of the Yamabe equation on positive Einstein manifolds. The
case of the round metric on the sphere was considered in \cite{JC}. 
As a corollary of the theorem we have:

\begin{corollary}
Let $(M,g)$ be a closed positive Einstein manifold  with an isometric action of cohomogeneity one and without fixed points.
Then the Yamabe equation (\ref{YE}) on $(M,g)$ has infinite nodal solutions: for each positive integer $k$ there exists a solution 
of (\ref{YE}) whose nodal domain is given by exactly $k$ regular orbits of the action. 
\end{corollary}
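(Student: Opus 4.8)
The plan is to obtain the Corollary as a direct specialization of the Theorem, so the work is entirely in checking that each hypothesis is met and that the Yamabe equation (\ref{YE}) is, after a harmless normalization, an instance of (\ref{YTE}). First I would record the geometric consequences of the Einstein hypothesis: if $g$ is a positive Einstein metric then $\mathrm{Ric}_g = \tfrac{\mathbf{s}_g}{n}\,g$ with $\mathbf{s}_g$ a positive constant, so $g$ has positive Ricci curvature and the Theorem is applicable. Moreover, by Bonnet--Myers the fundamental group of $M$ is finite, which rules out $M/G = S^1$; hence $M/G$ is a closed interval and the action has exactly two singular orbits $M_1,M_2$ of dimensions $m_1,m_2 < n-1$.

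The crucial point is the exponent inequality $q<p_G$ for the critical Yamabe exponent $q=p_n=\tfrac{n+2}{n-2}$, and this is where the hypothesis of no fixed points is used. A fixed point would be a zero-dimensional orbit, so ``without fixed points'' forces $m_1,m_2\geq 1$ and hence $m=\min\{m_1,m_2\}\geq 1$. Since $t\mapsto \tfrac{t+2}{t-2}$ is strictly decreasing on $(2,\infty)$ and $n-m<n$, we obtain
\[
p_G=\frac{n-m+2}{n-m-2} > \frac{n+2}{n-2}=p_n
\]
(with the convention $p_G=\infty$ when $m=n-2$). Thus $q=p_n<p_G$, which is exactly the range covered by the Theorem; this single comparison is the only place the no-fixed-point condition enters, and it is what lets the critical exponent be handled.

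It remains to exhibit (\ref{YE}) as an instance of (\ref{YTE}). Since the Einstein metric has constant scalar curvature, set $\lambda:=\mathbf{s}_g/a_n>0$. Dividing (\ref{YE}) by $a_n$ and rescaling the unknown by a suitable positive constant $u\mapsto\alpha u$ (which normalizes the coefficient of the nonlinear term and affects neither the sign pattern nor the zero set) puts the equation in the form $-\Delta_g u+\lambda u=\lambda\,|u|^{p_n-1}u$, i.e. (\ref{YTE}) with $q=p_n$, the nonlinearity being read as $|u|^{q-1}u$ because we allow sign changes. Applying the Theorem with this $\lambda$ and $q=p_n<p_G$ yields, for every $k\in\mathbb{N}$, a $G$-invariant sign-changing solution whose nodal set is $k$ copies of the regular orbit; reversing the scaling gives a nodal solution of (\ref{YE}) with the same nodal set, and distinct values of $k$ give distinct solutions, so there are infinitely many. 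I do not expect a genuine obstacle here: all the analytic difficulty lives in the Theorem, and the Corollary is a reduction whose only delicate steps are the exponent comparison above and keeping careful track of the constant $a_n$ and the rescaling.
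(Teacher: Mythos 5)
Your proposal is correct and follows the same route as the paper, which presents this corollary as a direct specialization of Theorem 1.1 (Einstein with $\mathbf{s}_g>0$ gives positive Ricci curvature; no fixed points gives $m\geq 1$, hence $p_n<p_G$; and (\ref{YE}) with constant scalar curvature is (\ref{YTE}) with $\lambda=\mathbf{s}_g/a_n$ and $q=p_n$). Your write-up simply makes these verifications explicit; note only that with the choice $\lambda_{\mathrm{YE}}=\mathbf{s}_g$ no rescaling of $u$ is even needed, since dividing (\ref{YE}) by $a_n$ already yields (\ref{YTE}) exactly.
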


There are many examples of cohomogeneity one positive Einstein manifolds without fixed points. For instance one has all the 
non-homogeneous examples constructed by C B\"{o}hm in \cite{Bohm}. Also if $(M,g)$ is homogeneous one can consider a
subgroup acting with cohomogeneity one. For instance for the particular case of the round sphere considered in \cite{JC} there are
many subgroups of $O(n+1)$ acting with cohomogeneity one. The simplest example is the action by $O(n)$ which fixes two
antipodal points. In this case the theorem cannot be applied and it is known that actually there are no nodal solutions which
are $O(n)$-invariant. But there are other subgroups of $O(n+1)$ which act with cohomogeneity one and with all orbits of 
positive dimension, as was studied in \cite{JC}.

The theorem can also be applied to Einstein manifolds of greater cohomogeneity. First one notes that if there is a Riemannian
submersion with totally geodesic fibers $\pi : (M,g) \rightarrow (N,h)$ then any function $u$ on $N$ gives a function $u \circ \pi$ on
$M$ which is constant along the fibers and the Laplacians commute: $\Delta_g (u\circ \pi )= (\Delta_h u) \circ \pi$. Then $u\circ \pi$ solves (\ref{YTE}) on $(M,g)$ if and only if $u$ solves (\ref{YTE}) on $(N,h)$. Then one can apply the theorem to $(N,h)$ in case it
is a cohomogeneity one Riemannian manifold of positive Ricci curvature. Note that if $m$ is a positive integer then $p_{m+n} <p_n$
and therefore the critical Yamabe equation on $(M,g)$ is a subcritical equation on $(N,h)$. Note also that the above applies in particular to the case of Riemannian products and one can apply the theorem if one of the factors has positive Ricci curvature and
a cohomogeneity one action. For instance M. Y. Wang and W. Ziller \cite[Corollary 2] {WZ} constructed positive Einstein metrics on total
spaces of Riemannian submersions over products of positive K\"{a}hler-Einstein manifolds. In this way they obtained positive Einstein manifolds of any given cohomogeneity to which one can apply the theorem, in the form of the following corollary: 

\begin{corollary} Let $(M,g)$ be a closed Riemannian manifold which fibers over a cohomogeneity one positive Einstein
manifold with totally geodesic fibers. Then the Yamabe equation (\ref{YE}) on $(M,g)$ has infinite nodal solutions: for any
positive integer $k$ there exists a nodal solution whose nodal domain has exactly $k$ connected components.
\end{corollary}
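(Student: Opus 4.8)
The plan is to produce the nodal solutions on $M$ by lifting, through the submersion, the nodal solutions furnished by the Theorem on the base. Write $\pi:(M^n,g)\to (N^\ell,h)$ for the given Riemannian submersion with totally geodesic fibers, where $(N,h)$ is a closed cohomogeneity one positive Einstein manifold and the fibers have dimension $n-\ell>0$. First I would record the reduction already isolated in the discussion preceding the statement: for such a submersion the Laplacians commute, so $\Delta_g(v\circ\pi)=(\Delta_h v)\circ\pi$ for every $v\in C^\infty(N)$. Since the total spaces in question (the Wang--Ziller fibrations) are Einstein, $(M,g)$ has constant scalar curvature $\mathbf{s}_g=a_n\lambda$ with $\lambda>0$, so after dividing by $a_n$ the Yamabe equation (\ref{YE}) on $M$ is precisely equation (\ref{YTE}) with $q=p_n=\frac{n+2}{n-2}$. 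Hence $u=v\circ\pi$ solves (\ref{YTE}) on $(M,g)$ if and only if $v$ solves (\ref{YTE}) on $(N,h)$, for the same $\lambda$ and the same exponent $q=p_n$.

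Next I would verify that the Theorem applies to $(N,h)$ with this exponent. As a positive Einstein manifold, $N$ has $\mathrm{Ric}_h>0$, and by hypothesis a group $G$ acts on it with cohomogeneity one, so the only condition left to check is the subcriticality $q<p_G$. If $m$ denotes the minimal dimension of a $G$-orbit in $N$, then $p_G=\frac{(\ell-m)+2}{(\ell-m)-2}$ (with the convention $p_G=\infty$ when $\ell-m\le 2$). Using that $t\mapsto\frac{t+2}{t-2}$ is decreasing together with $n>\ell\ge\ell-m$, I get
\[
q=p_n=\frac{n+2}{n-2}<\frac{\ell+2}{\ell-2}\le\frac{(\ell-m)+2}{(\ell-m)-2}=p_G .
\]
Thus $q<p_G$. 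The point worth stressing is that the first strict inequality comes solely from the fibers having positive dimension, $n>\ell$; this is why the base may here carry fixed points, in contrast with the preceding cohomogeneity one Corollary, where the exponent to be controlled was the critical exponent of the Einstein manifold itself and the absence of fixed points was therefore required.

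Then I would transport the solutions. Applying the Theorem to $(N,h)$ with $q=p_n$ gives, for each $k\in\mathbb{N}$, a $G$-invariant solution $v_k$ of (\ref{YTE}) on $N$ whose nodal set is $k$ copies of the regular orbit. Setting $u_k=v_k\circ\pi$ produces, by the reduction above, a solution of the Yamabe equation on $(M,g)$. Because a Riemannian submersion of a closed manifold with totally geodesic fibers is a fiber bundle with connected fibers, the nodal set of $u_k$ is $\pi^{-1}$ of that of $v_k$, i.e.\ $k$ copies of the regular fiber of $M$, and the $\pi$-preimage of each connected nodal domain of $v_k$ is a single connected region of $M$. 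Hence $u_k$ has exactly the same number of nodal components as $v_k$. Since this number increases with $k$, the $u_k$ are pairwise distinct, giving infinitely many nodal solutions with the prescribed number of connected components.

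The step I expect to be the crux is the reduction in the first paragraph rather than the exponent arithmetic: one must know that $(M,g)$ has constant scalar curvature for (\ref{YE}) on $M$ to descend to (\ref{YTE}) on $N$ — this is exactly where one uses that the total spaces under consideration are Einstein — and one must know that the count of nodal domains is not altered by the pullback, which is where the connectedness of the fibers enters. Granting these two structural facts, the corollary follows from the Theorem together with the bookkeeping above.
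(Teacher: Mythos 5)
Your proof is correct and follows essentially the same route as the paper, whose own argument for this corollary is exactly the discussion preceding it: lift functions through the submersion via $\Delta_g(v\circ\pi)=(\Delta_h v)\circ\pi$, note that the critical exponent of $M$ becomes subcritical on the lower-dimensional base (in particular $p_n<p_\ell\le p_G$, so no fixed-point hypothesis on the base action is needed), and apply Theorem 1.1 to the base. If anything, you supply details the paper leaves implicit --- the use of the Einstein condition on the total space to get constant positive scalar curvature, and the fiber connectedness behind the exact count of nodal components, though you should note the latter is really an implicit hypothesis rather than automatic (a Riemannian double cover is a Riemannian submersion with totally geodesic but disconnected fibers).
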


The proof of the theorem relies on the double shooting argument developed in \cite{JC} for the case of the round sphere.
By considering equation (\ref{YTE}) for $G$-invariant functions one reduces (\ref{YTE}) to an ordinary differential equation
defined on $M/G$ and with a singularity on each end-point. The initial (or {\it final}) value problem can be solved at each singularity. If two of these solutions match at a middle point we have a global solution for (\ref{YTE}). For the procedure to work it is
necessary that the mean curvature of the fibers (which are constant) give a monotone function on the interval.  We 
will point out that this is the case when the Ricci curvature is positive. In the argument in  \cite{JC} for the case 
of the round sphere it is used that all positive solutions of the Yamabe equation on the sphere are radial and
therefore there are no positive non-constant solutions invariant by other cohomogeneity one actions. But for a general
positive Ricci curvature metric with constant scalar curvature,  there might be non-constant positive solutions (for instance the are such solutions in Riemannian products of round spheres \cite{Petean}). We will overcome this problem by restricting the initial values
of the solutions of the ordinary differential equation. The proof of the theorem will be carried out in Section 3. In Section 2 we will see the
details of the reduction of (\ref{YTE}) to an ordinary differential equation.

\section{preliminaries: reduction to an ordinary differential equation}

We consider  a cohomogeneity one isometric action of a group $G$ on the Riemannian manifold $(M,g)$  so that
the orbit space $M/G$ is an interval. We let $M_1$ and $M_2$ be the singular orbits and we let $f:M \rightarrow 
[0,d]$ be $f(x) =d(x, M_1 )$, so $d=d(M_1 ,M_2 )$. Note that $f$ is a continuous function, and it is smooth on $M-(M_1 \cup M_2 )$. On a normal neighborhood $U_r$ around $M_1$, $U_r = \{ (x,y): x\in M_1 , y \in B(0,r) \subset  (T_x M_1 )^{\perp} \}$ the function
$f$ is given by $f(x,y) = \| y \| $. For $t \in (0,d)$ the mean curvature $h(t)$ of $f^{-1} (t)$ is given by $h(t) =  \Delta_g f \ 
(x)$, for any $x \in f^{-1} (t)$. We will need the information from the following elementary Proposition:

\begin{propo} Let $G$ act on $(M,g)$ so that $M/G$ is an interval. Let $M_1$ and $M_2$ be the singular orbits and 
$m_i = dim (M_i )$. Let $f: M \rightarrow [0,d]$ be the distance to $M_1$ and for $t\in (0,d)$ let $h(t)$ be the mean curvature
of $f^{-1} (t)$. Then 

\begin{equation}
\lim_{t\rightarrow 0}  t   \ h(t) = n-m_1 -1 \ \ \ ,  \ \ \ \ \lim_{t\rightarrow d} (t-d )   \ h(t) = n-m_2 -1.
\end{equation}

If the Ricci curvature of $g$ is positive then $h$ is a strictly decreasing function. So in particular there exists 
$t_0 \in (0,d)$ such that $h(t) > 0 $ if $t\in (0,t_0 )$ and $h(t)<0$ if $t\in (t_0 ,d )$.
\end{propo}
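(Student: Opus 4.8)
The plan is to analyze the mean curvature through the Riccati equation satisfied by the shape operator of the level sets $f^{-1}(t)$ along the normal geodesics issuing from $M_1$. First I would fix a unit-speed geodesic $\gamma(t)$ meeting $M_1$ orthogonally at $t=0$, with $\gamma'(t) = \nabla f$ along $\gamma$, so that $\gamma(t) \in f^{-1}(t)$. The shape operator of $f^{-1}(t)$ with respect to $\nabla f$ is $S(t) = \text{Hess}_g f$ restricted to $T f^{-1}(t)$, and since $|\nabla f| \equiv 1$ (which forces $\text{Hess}_g f(\nabla f, \cdot) = 0$) one has $h(t) = \Delta_g f = \text{tr}(S(t))$; because $G$ acts transitively by isometries on the regular orbit $f^{-1}(t)$ and $f$ is $G$-invariant, $h$ is indeed a well-defined function of $t$ alone. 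Along $\gamma$ the operator $S$ solves the Riccati equation $S' + S^2 + R_{\gamma'} = 0$, where $R_{\gamma'}(X) = R(X,\gamma')\gamma'$ is the Jacobi operator. Taking the trace and using $\text{tr}(R_{\gamma'}) = \text{Ric}(\gamma',\gamma')$ gives
\[
h'(t) = -\text{tr}(S(t)^2) - \text{Ric}(\gamma'(t),\gamma'(t)).
\]

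The monotonicity is then immediate: $S(t)$ is self-adjoint, so $\text{tr}(S(t)^2) \geq 0$, and if $\text{Ric} > 0$ the right-hand side is strictly negative. Hence $h$ is strictly decreasing on $(0,d)$.

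For the boundary asymptotics I would read the eigenvalues of $S(t)$ off the Jacobi fields along $\gamma$ that generate the tubular family around $M_1$. These admissible Jacobi fields form an $(n-1)$-dimensional space that splits into two kinds: the $m_1$ fields coming from variations tangent to $M_1$, for which $J(0) \neq 0$ and the associated principal curvature $g(S J, J)/|J|^2 = |J|'/|J|$ stays bounded as $t\to 0$; and the $n-m_1-1$ fields coming from rotating the initial direction $\gamma'(0)$ inside the normal space $\nu_p M_1$, for which $J(0)=0$, $J(t) = t\,e(t) + O(t^3)$ with $e(t)$ parallel of unit length, so that the principal curvature behaves like $1/t + O(t)$. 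Summing, $h(t) = (n - m_1 - 1)/t + O(1)$, whence $t\,h(t) \to n - m_1 - 1$. The identical analysis at $M_2$, using the distance $d - t$ and noting that $\nabla f$ is the inward normal to the tube around $M_2$ (so the collapsing curvatures change sign), yields $h(t) = -(n - m_2 - 1)/(d-t) + O(1)$ and therefore $(t-d)\,h(t) \to n - m_2 - 1$.

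Finally, since $m_i < n - 1$ both limits $n - m_i - 1$ are positive, so $h(t) \to +\infty$ as $t \to 0^+$ and $h(t) \to -\infty$ as $t \to d^-$. Together with continuity and strict monotonicity of $h$ on $(0,d)$, the intermediate value theorem produces a unique $t_0$ with $h(t_0) = 0$, and the strict decrease gives $h > 0$ on $(0,t_0)$ and $h < 0$ on $(t_0, d)$. I expect the boundary step to be the main obstacle: justifying rigorously that exactly $n - m_1 - 1$ principal curvatures blow up like $1/t$ while the remaining $m_1$ stay bounded rests on the precise focal initial conditions for Jacobi fields at the singular orbit $M_1$ (equivalently, the focal expansion of $\text{Hess}_g f$ along a geodesic tube), whereas the monotonicity and the final sign conclusion follow at once from the Riccati trace identity.
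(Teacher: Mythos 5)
Your proposal is correct and follows essentially the same route as the paper: the monotonicity via the trace of the Riccati (radial curvature) equation $\nabla_L S_t + S_t^2 + R_L = 0$ is identical, and your Jacobi-field analysis of the asymptotics (the $n-m_1-1$ fields vanishing at $M_1$ contributing $1/t$ versus the $m_1$ bounded ones tangent to $M_1$) is the same decomposition the paper carries out in normal coordinates, viewing $f^{-1}(t)$ as an $(n-m_1-1)$-sphere bundle over $M_1$. Your writeup is in fact somewhat more careful than the paper's, which leaves the boundary asymptotics as "easily checked" and contains a spurious factor of $(n-1)$ in the trace identity; your sign bookkeeping at $M_2$ and the concluding intermediate value argument are both correct.
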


\begin{proof} The formula for the asymptotic behavior of $h$ is obtained by considering normal coordinates around
the singular orbits. In these coordinates one sees  $f^{-1}(t)$ as a $(n-m_i -1)$-sphere bundle over $M_i$.  Then one computes  the mean curvature of the hypersurfaces $f^{-1} (t)$ using vectors tangent to the spheres and vector tangent
to $M_i$  and the asymptotic behavior is easily checked. 

Now for $t\in (0,d)$ and any $x\in f^{-1}(t)$ we notice that $\| \nabla f (x) \| =1$ and $\nabla f (x)$ is normal to $f^{-1} (t)$. Let $L:=\nabla f$, so by the  radial curvature equation we have 
$$-\nabla_L S_t=S_t^2+R_L,$$
where $S_t$ is the shape operator of $f^{-1} (t)\;(S_t(\cdot)=\nabla_{(\cdot)}L)$ and $R_L(\cdot)=R(\cdot,L)L$ the curvature tensor of $(M,g)$.\\
The trace of this equation allows us to describe the monotonicity of the mean curvature function $(h(t)=\Delta f(x)=tr(S_t))$:
$$-(n-1)h^{\prime}(t)=-\nabla_Ltr(S_t)=-tr(\nabla_LS_t)=tr(S_t^2)+Ricc(L,L)\geq Ricc(L,L)>0.$$
Therefore $h$ is strictly decreasing.

\end{proof}

Now let $u:[0,d] \rightarrow \re$ and consider $U=u \circ f$. Note that $U \in C^2 (M)$ if and only if
$u\in C^2 [0,d]$ and $u'(0)=u'(d)=0$ (see for instance \cite[Lemma 2.3]{BJP}. We have that  
$$\Delta_g U = (u''  \circ f ) \  \| \nabla f \|^2 + (u' \circ f ) \  \Delta f= (u'' + u' h) \circ f .$$ 
Then $U$ solves (\ref{YTE}) if and only if $u$ solves

\begin{equation}\label{ODE}
u ''+ h u' + \lambda u  (|u|^{q-1}  - 1)=0.
\end{equation}

It follows that Theorem 1.1 is a consequence of the following:

\begin{theorem}\label{zeros} Let $h:(0,d) \rightarrow \re$ be a smooth function which verifies 

\begin{itemize}
\item $h'(t) < 0$ for all $t\in (0,d)$
\item $\lim_{t\rightarrow 0} th(t) =n-m_1 -1$, $\lim_{t\rightarrow d} (t-d)h(t)  =n-m_2 -1$, for some integers $0\leq m_1 ,
m_2 \leq n-2$, $n\geq 3$.
\end{itemize}

Let $m=\min \{ m_1 , m_2 \}$, and $p=\frac{n-m +2}{n-m-2}$. Then for any $q\in (1,p)$ and any positive 
integer $k$ equation
\ref{ODE} has a solution $u$ which has exactly $k$ zeroes in the interval $(0,d)$ and verifies $u'(0) =u'(d)=0$.

\end{theorem}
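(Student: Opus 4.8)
The plan is to prove Theorem \ref{zeros} by a double shooting argument that treats the two endpoints as regular singular points of (\ref{ODE}) and matches the resulting one-parameter families in the interior. First I would analyze the initial value problem at each singular end. Near $t=0$ the equation reads $u''+\frac{n-m_1-1}{t}(1+o(1))u'+\lambda u(|u|^{q-1}-1)=0$, a regular singular point whose indicial roots of the associated Euler part are $0$ and $2-(n-m_1)\le 0$; the bounded branch is exactly the one with $u'(0)=0$. A contraction-mapping argument in a suitable weighted space would give, for each $a\in\re$, a unique solution $u_a$ on a small interval $(0,\delta)$ with $u_a(t)\to a$ and $u_a'(t)\to 0$, depending continuously (in $C^1$ on compact subsets) on $a$; since the equation is odd in $u$ we may take $a>0$. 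The same analysis at $t=d$, in the reversed variable $s=d-t$, produces a family $w_b$ regular at $d$ with $w_b(d)=b$, $w_b'(d)=0$.

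Next I would show that $u_a$ and $w_b$ extend to all of the open interval $(0,d)$. The natural energy $E=\tfrac12(u')^2+G(u)$, with $G(u)=\lambda\big(\tfrac{|u|^{q+1}}{q+1}-\tfrac{u^{2}}{2}\big)$, satisfies $E'=-h\,(u')^2$. Since $G$ is a double well bounded below (minima at $\pm1$, local max $G(0)=0$) and grows superlinearly, a Gronwall estimate on every compact subinterval $[\epsilon,d-\epsilon]$, where $h$ is bounded, rules out interior blow-up and gives bounds uniform on parameter-compacts; the superlinear restoring term $-\lambda|u|^{q-1}u$ for large $|u|$ is what prevents escape to infinity.

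The heart of the argument is the matching, and here the hypotheses on $h$ enter decisively. Because $h$ is strictly decreasing with $h(0^+)=+\infty$ and $h(d^-)=-\infty$, it changes sign exactly once, at some $t_0$. I would match the two families at $t_0$: on $[0,t_0]$ the damping $h$ is positive, and on $[t_0,d]$, after $s=d-t$, the damping $-h(d-s)$ is again positive and decreasing to $0$ at $s=d-t_0$. Thus both halves are \emph{of the same type} — a regular singular point with positive, decreasing damping — which makes the setup symmetric across $t_0$ and the oscillation counts on the two sides comparable. A global solution of (\ref{ODE}) with $u'(0)=u'(d)=0$ is precisely a pair $(a,b)$ with $u_a(t_0)=w_b(t_0)$ and $u_a'(t_0)=w_b'(t_0)$, and I would produce such matchings, with a prescribed split of the $k$ zeros between the two halves, by a continuity and intermediate-value argument in the shooting parameters.

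The main obstacle — and where $q<p$ is used — is the precise control of the number of zeros. By an oscillation (Sturm-type) comparison together with a rescaling centered at the singular point, I would show that as $a\to\infty$ the solution $u_a$ acquires arbitrarily many zeros on $[0,t_0]$, the rescaled limit being a Lane–Emden-type equation; the subcriticality $q<p=\frac{n-m+2}{n-m-2}$ relative to the larger effective dimension $n-m$ is exactly what forces the zero count to diverge, whereas at the critical exponent a Pohozaev identity would cap it. To keep the count clean I would restrict the shooting parameter away from the trapping regime near the stable equilibria $\pm1$, where solutions oscillate without crossing zero; these non-nodal solutions are the obstruction that, unlike the round-sphere case, need not be constant here. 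Combining the divergence of the zero count for large amplitude, continuity in the parameter, and the matching at $t_0$, I would select parameters realizing exactly $k$ interior zeros with no spurious zero at $t_0$, which completes the proof.
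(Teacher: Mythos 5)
Your overall skeleton coincides with the paper's: shoot from both singular endpoints (well-posedness via a contraction argument, as in \cite{k}), match at the unique sign change $t_0$ of $h$ (where the energy $E$ is monotone on each side since $E'=-h(u')^2$), and use the subcriticality $q<p$ to show that the number of zeros on $[0,t_0]$ diverges as the shooting parameter grows (the paper invokes \cite[Theorem 3.1]{JC} for exactly this). However, there is a genuine gap at what is the heart of the proof: the matching. A global solution requires simultaneously $u_\alpha(t_0)=u^*_\beta(t_0)$ and $u_\alpha'(t_0)=(u^*_\beta)'(t_0)$ --- two equations in the two parameters $(\alpha,\beta)$ --- so a one-variable ``continuity and intermediate-value argument'' cannot produce it. One needs a planar intersection argument for the curves $I(\alpha)=(u_\alpha(t_0),u_\alpha'(t_0))$ and $F(\beta)=(u^*_\beta(t_0),(u^*_\beta)'(t_0))$, together with control of the \emph{winding angle} at which they meet, since it is the difference of the angle functions ${\bf a}(\alpha)$ and ${\bf b}(\beta)$ (not the parameters themselves) that determines the exact number of zeros. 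The paper does this by proving ${\bf a}\to-\infty$ and ${\bf b}\to+\infty$, rewriting $I,F$ in (angle, modulus) coordinates as curves $R,S$, and showing that suitable horizontal translates $S-(k\pi,0)$ must intersect $R$; your sketch contains no substitute for this step.

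Second, your proposed handling of the key obstruction --- non-constant positive solutions --- does not work as stated. The difficulty is not that solutions get ``trapped near the stable equilibria $\pm1$''; it is that the curves $R$ and $S$ may genuinely intersect (each such intersection \emph{is} a positive solution by Lemma \ref{positive}), and any such intersection destroys the simple-curve structure needed for the translate-intersection argument. The paper resolves this by restricting to parameters beyond the \emph{last} intersection point $(\alpha^*,\beta^*)$, so that $T=R|_{[\alpha^*,\infty)}\cup S|_{[\beta^*,\infty)}$ is a simple curve dividing the half-plane into regions $U$ and $D$; it then proves that the exit levels $C_k=\|I(e_k)\|$ form a monotone sequence, and argues by cases: if $C_k$ (or $C_k^*$) is decreasing, solutions with exactly $k$ zeros are obtained by a direct energy/configuration argument as in \cite{JC}; if both are increasing, then $R-k(\pi,0)\subset D$ and $S+k(\pi,0)\subset D$, which forces $S^*-(k\pi,0)$ to cross $R^*$, and the zero-counting lemma (Lemma \ref{nodal}) converts that crossing into a solution with exactly $k$ zeros. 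None of this case analysis, nor the monotonicity of the exit levels, nor the exact bookkeeping relating angle differences to zero counts, appears in your proposal, and these are precisely the points where the difficulties not present in the round-sphere case are overcome.
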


\section{Proof of Theorem 2.2}

For each $\alpha \in \re$ we consider the solution $u_{\alpha}$ of equation (\ref{ODE}) which satisfies 
$u_{\alpha}(0)=\alpha$, $u_{\alpha}'(0)=0$. It is well known that the initial value problem has a unique local solution, and
it depends continuously on $\alpha$ in the $C^1$-topology (it follows from a standard contraction argument,
see for instance \cite{k}).  

Note the there exists a unique $t_0 \in (0,d)$ such that $h(t) < 0 $ if $t\in (0,t_0 )$ and $h(t)>0$ if $t\in (t_0 ,d )$.

We consider the energy function 

$$E(u) =\frac{u'^2}{2} +\lambda \left( \frac{|u|^q}{q} -\frac{u^2}{2} \right) .$$

If $u_{\alpha}$ is a solution of (\ref{ODE}) then 

$$E(u_{\alpha} )' =  -h(t) ( u_{\alpha}' (t) )^2 .$$ 

It follows that $E(u_{\alpha})$ is decreasing in $[0,t_0 ]$ and increasing in $[t_0 , d]$. In particular this proves that $u_{\alpha}$ 
remains uniformly bounded in the $C^1$-topology on $[0,t_0 ]$ (while it is defined) and this implies that $u_{\alpha}$ is defined at least on
$[0,t_0 ]$.

We recall the following result from \cite[Theorem 3.1]{JC}

\begin{theorem}\label{Prop:prescribed zeroes} For constants  $A>0$, $p>1$ and $\lambda>0$ and a positive  $C^1$ function $H$ defined in the interval $[0,A]$, consider the initial condition problem

\begin{equation}\label{Eq:General singular}
\left\{\begin{tabular}{cc}
$w''(r) + \frac{H(r)}{r} w'(r) + \lambda( |w(r)|^{p-1} w -w) =0$ & in $[0,A]$\\
$w(0)=d, w'(0)=0,$ &
\end{tabular}\right.
\end{equation}

Suppose  the following inequality
\begin{equation}\label{Eq: Subcriticality ODE}
\frac{H(0) +1 }{2} < \frac{p+1}{p-1}
\end{equation}
holds true. Then, for any $\varepsilon >0$ and  any positive integer $k$ there exists $d_k > 0$ so that the solution $w_{d }$ of \eqref{Eq:General singular} has at least $k$ zeroes in $(0,\varepsilon )$ for any $d\geq d_k$
\end{theorem}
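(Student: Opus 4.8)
The plan is to prove the statement by a blow-up argument driven by the initial height $d$. As $d\to\infty$ the superlinear term $\lambda|w|^{p-1}w$ dominates the linear term $\lambda w$, and after the scaling adapted to this nonlinearity the solutions $w_d$ converge, on shrinking windows around $r=0$, to a solution of a limiting pure-power equation. The subcriticality hypothesis \eqref{Eq: Subcriticality ODE} is exactly what makes that limiting solution oscillate, and its oscillations, pulled back by the scaling, accumulate at $r=0$, producing the prescribed number of zeroes inside $(0,\varepsilon)$. Concretely, set $\gamma=\tfrac{p-1}{2}$ and define $\phi_d(s):=d^{-1}w_d(d^{-\gamma}s)$, so that $\phi_d(0)=1$, $\phi_d'(0)=0$. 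Substituting $w_d(r)=d\,\phi_d(d^{\gamma}r)$ into \eqref{Eq:General singular} and using $1+2\gamma=p$, a direct computation gives
\begin{equation}\label{eq:rescaled-sketch}
\phi_d''(s)+\frac{H_d(s)}{s}\,\phi_d'(s)+\lambda\bigl(|\phi_d(s)|^{p-1}\phi_d(s)-d^{1-p}\phi_d(s)\bigr)=0,\qquad H_d(s):=H(d^{-\gamma}s).
\end{equation}
Fix $S>0$. For $s\in[0,S]$ we have $d^{-\gamma}s\to0$ as $d\to\infty$, so $H_d\to H(0)$ uniformly by continuity of $H$, while $d^{1-p}\to0$ because $p>1$. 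Thus the coefficients of \eqref{eq:rescaled-sketch} converge uniformly on $[0,S]$ to those of the limiting Lane--Emden equation
\begin{equation}\label{eq:limit-sketch}
\phi_\infty''(s)+\frac{H(0)}{s}\,\phi_\infty'(s)+\lambda\,|\phi_\infty(s)|^{p-1}\phi_\infty(s)=0,\qquad \phi_\infty(0)=1,\ \phi_\infty'(0)=0.
\end{equation}

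Writing each of these singular initial value problems in the integral form obtained from the integrating factor associated to the coefficient $H_d(s)/s$ (respectively $H(0)/s$), the same contraction argument used to solve \eqref{Eq:General singular} shows that $\phi_d\to\phi_\infty$ in $C^1[0,S]$ for every $S>0$. The essential point is the oscillation of $\phi_\infty$. Put $N:=H(0)+1$; clearing denominators shows that \eqref{Eq: Subcriticality ODE} is equivalent to the strict subcriticality $p<\tfrac{N+2}{N-2}$ when $N>2$, and holds automatically when $N\le2$. Equation \eqref{eq:limit-sketch} is the radial Lane--Emden equation in ``dimension'' $N$, and it is classical that in this subcritical range its regular solution vanishes infinitely often, every zero being simple. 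One way to see this is to pass to the Fowler variables $t=\ln s$, $\phi_\infty=s^{-2/(p-1)}y(t)$, which recast \eqref{eq:limit-sketch} as an autonomous equation whose nontrivial equilibria are spiral points exactly when $p<\tfrac{N+2}{N-2}$; alternatively the energy $E(s)=\tfrac12\phi_\infty'^2+\tfrac{\lambda}{p+1}|\phi_\infty|^{p+1}$ satisfies $E'(s)=-\tfrac{H(0)}{s}\phi_\infty'^2\le0$, and a Pohozaev identity rules out a solution of one sign. Zeroes are simple because a double zero would force $\phi_\infty\equiv0$ by uniqueness. In particular there is $S_k>0$ for which $\phi_\infty$ has at least $k$ simple, sign-changing zeroes in $(0,S_k)$.

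It remains to transfer these zeroes and undo the scaling. At each of the $k$ simple zeroes $s^\ast\in(0,S_k)$ the function $\phi_\infty$ changes sign, hence takes values of opposite sign on the two sides of $s^\ast$; choosing disjoint neighbourhoods and using the uniform convergence $\phi_d\to\phi_\infty$ on $[0,S_k]$, for all sufficiently large $d$ the function $\phi_d$ inherits the same sign changes and therefore has at least $k$ zeroes in $(0,S_k)$. Undoing the rescaling, $w_d(r)=d\,\phi_d(d^{\gamma}r)$ then has at least $k$ zeroes in the interval $(0,\,S_k d^{-\gamma})$. Since $\gamma>0$ we have $S_k d^{-\gamma}\to0$, so there is $d_k>0$ (enlarged if necessary so that the convergence statements above also hold) with $S_k d^{-\gamma}<\varepsilon$ for all $d\ge d_k$. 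For such $d$ the solution $w_d$ has at least $k$ zeroes in $(0,\varepsilon)$, which proves the theorem.

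I expect the oscillation of the limit problem \eqref{eq:limit-sketch} to be the main obstacle: one must establish that subcriticality genuinely forces infinitely many sign changes rather than an eventual one-signed decay, and it is here that the precise threshold in \eqref{Eq: Subcriticality ODE} enters. The scaling bookkeeping is routine, but the continuous dependence $\phi_d\to\phi_\infty$ must be carried out through the integrating-factor integral formulation in order to control the solutions across the regular singular point $s=0$.
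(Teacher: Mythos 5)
Your proposal is correct and is essentially the proof of this statement that the paper relies on: the paper itself does not reprove the theorem but recalls it from \cite[Theorem 3.1]{JC}, whose argument is exactly your blow-up scheme --- the rescaling $\phi_d(s)=d^{-1}w_d(d^{-(p-1)/2}s)$, convergence on compact intervals to the regular solution of the Lane--Emden equation with singular coefficient $H(0)/s$ (justified by the singular-IVP contraction of the type cited as \cite{k}, together with uniform a priori bounds from the decreasing energy, a continuation step your sketch should make explicit since the contraction alone only controls a short interval), oscillation of the limit solution forced by \eqref{Eq: Subcriticality ODE} via a Pohozaev-type identity, and transfer of the simple zeroes back through the scaling. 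One inaccuracy in an aside: the claim that the Fowler system's nontrivial equilibria are spirals \emph{exactly} when $p<\frac{N+2}{N-2}$ (with $N=H(0)+1$) is false as stated --- for $p$ at or below the Serrin exponent $\frac{N}{N-2}$ those equilibria do not even exist --- but since your alternative energy/Pohozaev route is the correct classical argument for infinitely many sign changes, nothing essential is affected.
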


By calling $H(t)=th(t)$ equation (\ref{ODE}) falls into the previous result. Note that we have in this case $H(0) = n-m_1 -1$, by Proposition 2.1. So to apply the theorem we need

$$\frac{n-m_1 }{2} < \frac{q+1}{q-1} = 1 + \frac{2}{q-1},$$

or

$$\frac{q-1}{2} < \frac{2} {n-m_1 -2}$$

which happens if and only if 

$$q<  \frac{n-m_1 +2}{n-m_1 -2}.$$

\vspace{.3cm}

But since $m \leq m_1$ we have that $ \frac{n-m +2}{n-m -2} \leq  \frac{n-m_1 +2}{n-m_1 -2}  $, and we can apply the theorem to obtain:

\begin{lemma}\label{Z} For any $\varepsilon >0$ and  any positive integer $k$ there exists ${\alpha}_k > 0$ so that the solution $u_{\alpha }$ of (\ref{ODE}) has at least $k$ zeroes in $(0,\varepsilon )$ for any $\alpha \geq {\alpha}_k$.
\end{lemma}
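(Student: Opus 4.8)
The plan is to recognize \eqref{ODE} as an instance of \eqref{Eq:General singular} and then invoke Theorem \ref{Prop:prescribed zeroes}. Writing $H(t)=t\,h(t)$, equation \eqref{ODE} becomes
$$u''+\frac{H(t)}{t}\,u'+\lambda\big(|u|^{q-1}u-u\big)=0,$$
which is precisely \eqref{Eq:General singular} with exponent $p=q$ and with the initial value $\alpha$ playing the role of the parameter $d$ in \eqref{Eq:General singular}; in particular $u_\alpha$ is the solution denoted $w_\alpha$ there. It remains only to produce an interval $[0,A]$ on which the two hypotheses of Theorem \ref{Prop:prescribed zeroes} hold.

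First I would settle positivity. By Proposition 2.1, $H(0)=\lim_{t\to 0}t\,h(t)=n-m_1-1\geq 1>0$, and since $h>0$ on $(0,t_0)$ the function $H=t\,h$ is positive and $C^1$ on $[0,t_0)$ with $H(0)=n-m_1-1$. Because $h$ changes sign at $t_0$, one cannot run Theorem \ref{Prop:prescribed zeroes} on an arbitrarily large interval; instead I fix any $A$ with $0<A<\min\{\varepsilon,t_0\}$, so that $H>0$ on $[0,A]$. This localization is harmless: any zero produced in $(0,A)$ already lies in $(0,\varepsilon)$, and by the energy estimate established above $u_\alpha$ is defined on $[0,A]\subset[0,t_0]$.

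Next I would verify the subcriticality condition \eqref{Eq: Subcriticality ODE}. With $H(0)=n-m_1-1$ and $p=q$ it reads $\tfrac{n-m_1}{2}<\tfrac{q+1}{q-1}$, which, as in the computation preceding the statement, is equivalent to $q<\tfrac{n-m_1+2}{n-m_1-2}$ (and is automatic when $m_1=n-2$). Since $m\mapsto\tfrac{n-m+2}{n-m-2}=1+\tfrac{4}{n-m-2}$ is increasing and $m\leq m_1$, the hypothesis $q<p=\tfrac{n-m+2}{n-m-2}$ gives $q<\tfrac{n-m_1+2}{n-m_1-2}$, so \eqref{Eq: Subcriticality ODE} holds.

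With both hypotheses verified on $[0,A]$, Theorem \ref{Prop:prescribed zeroes} provides, for the given $k$, a number $d_k>0$ such that $w_\alpha$ has at least $k$ zeroes in $(0,A)$ whenever $\alpha\geq d_k$; setting $\alpha_k:=d_k$ and using $w_\alpha=u_\alpha$ together with $(0,A)\subset(0,\varepsilon)$ completes the argument. I expect the only real obstacle to be bookkeeping rather than analysis: all the genuine oscillation content sits inside the cited Theorem \ref{Prop:prescribed zeroes}, so the points requiring care here are (i) that the sign change of $h$ at $t_0$ forces one to localize to $A<t_0$, and (ii) that the relevant subcriticality exponent is governed by $m_1$ and is worst for the smaller index, which is exactly why the hypothesis is phrased with $m=\min\{m_1,m_2\}$.
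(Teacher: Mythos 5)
Your proof is correct and follows essentially the same route as the paper: rewrite (\ref{ODE}) as (\ref{Eq:General singular}) via $H(t)=th(t)$, verify the subcriticality condition $\frac{H(0)+1}{2}<\frac{q+1}{q-1}$ using $H(0)=n-m_1-1$ and $m\leq m_1$, and invoke Theorem \ref{Prop:prescribed zeroes}. Your explicit localization to an interval $[0,A]$ with $A<\min\{\varepsilon,t_0\}$, so that $H$ stays positive despite the sign change of $h$ at $t_0$, is a detail the paper leaves implicit, but the argument is the same.
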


We define the curve 
\begin{align*}
I:\mathbb{R}&\rightarrow\mathbb{R}^2\\
\alpha&\mapsto (u_{\alpha} (t_0),u'_{\alpha} (t_0)).
\end{align*}

Note that by the uniqueness of solutions of the ordinary differential equation with given initial conditions, $I$ is a simple curve. Also  $I(0)=(0,0)$, $I(-\alpha )=-I(\alpha )$ (if $u(t)$ is a solution of (\ref{ODE}) the $-u(t)$ is also a solution) and $I(1) = (1,0)$.
There is a well defined map ${\bf a} : (0,\infty ) \rightarrow \re^2 - \{(0,0) \}$ such that  ${\bf a} (1) =0$ and 
${\bf a} (\alpha ) $ gives an angle between $I(\alpha )$ and the positive real axis.

Note that $u_{\alpha} (t_0 ) =0$ if and only if $I(\alpha )$ lies in the imaginary axis if and only if ${\bf a} (\alpha ) = \frac{\pi}{2}
+ k \pi$ for some integer $k$. Similarly $u_{\alpha} '(t_0 ) =0$ if and only if ${\bf a} (\alpha ) =  k \pi$ for some integer $k$.

\vspace{.3cm}

For $\alpha \neq 0$ let $n(\alpha )$ be the number of zeroes of $u_{\alpha}$ in $(0, t_0 )$.

\begin{remark}\label{RemarkZeroes}
It is easy to see that as the curve $\alpha \mapsto I(\alpha )$ crosses the imaginary axis clockwise 
$n(\alpha )$  increases by one, while it decreases by one if $I(\alpha )$ crosses the imaginary axis counter-clockwise. It follows that ${\bf a}(\alpha )$ determines $n(\alpha )$. For instance if ${\bf a}(\alpha ) \in (-\pi /2 , \pi /2 )$ then $u_{\alpha}$ is
positive  in $(0, t_0 )$. If ${\bf a}(\alpha ) \in (-3\pi/2 ,-\pi /2 )$ then $u_{\alpha}$ has exactly one zero
in $(0, t_0 )$, and so on.
\end{remark}

The remark and Lemma \ref{Z} imply:

\begin{lemma}\label{alphainfinite} $\lim_{\alpha \rightarrow \infty} {\bf a} (\alpha ) = - \infty$.
\end{lemma}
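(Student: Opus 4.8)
The plan is to combine the zero-counting result of Lemma \ref{Z} with the dictionary between the angle function $\mathbf{a}$ and the zero count $n(\alpha)$ recorded in Remark \ref{RemarkZeroes}. The underlying idea is that letting $\alpha \to \infty$ forces the number of zeroes of $u_\alpha$ on $(0,t_0)$ to blow up, and since each additional zero corresponds to $\mathbf{a}$ having descended by a further $\pi$, the angle $\mathbf{a}(\alpha)$ is driven to $-\infty$.

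First I would make Remark \ref{RemarkZeroes} quantitative. Since $\mathbf{a}(1)=0$ corresponds to the constant solution with $n(1)=0$, and each clockwise crossing of the imaginary axis increments $n$ by one while each counterclockwise crossing decrements it, the correspondence sharpens to the monotone bin structure: $n(\alpha)=k$ exactly when $\mathbf{a}(\alpha)\in\bigl(-\tfrac{(2k+1)\pi}{2},-\tfrac{(2k-1)\pi}{2}\bigr)$, so that $n(\alpha)\geq k$ forces $\mathbf{a}(\alpha)\leq-\tfrac{(2k-1)\pi}{2}$. Here I use that $I(\alpha)\neq(0,0)$ for $\alpha\neq 0$, which holds because $u_\alpha(t_0)=u'_\alpha(t_0)=0$ would give $u_\alpha\equiv 0$ by uniqueness, contradicting $u_\alpha(0)=\alpha$; this is precisely what makes $\mathbf{a}$ a well-defined continuous lift of the angle.

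Next I would feed in Lemma \ref{Z} with the choice $\varepsilon=t_0$: for every positive integer $k$ there is an $\alpha_k$ such that $u_\alpha$ has at least $k$ zeroes in $(0,t_0)$ whenever $\alpha\geq\alpha_k$, i.e. $n(\alpha)\geq k$ for all $\alpha\geq\alpha_k$. Combined with the bin correspondence above, this gives $\mathbf{a}(\alpha)\leq-\tfrac{(2k-1)\pi}{2}$ for all $\alpha\geq\alpha_k$. Thus, given any $M>0$, choosing $k$ with $\tfrac{(2k-1)\pi}{2}>M$ yields $\mathbf{a}(\alpha)<-M$ for all $\alpha\geq\alpha_k$, which is exactly the assertion $\lim_{\alpha\to\infty}\mathbf{a}(\alpha)=-\infty$.

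The only delicate point, and the step I would treat most carefully, is upgrading the informal phrase ``$\mathbf{a}$ determines $n$'' of Remark \ref{RemarkZeroes} into the rigid monotone bin correspondence that I need in the converse direction (that a large value of $n$ forces a very negative $\mathbf{a}$). To justify this I would observe that $n(\alpha)$ can change only at parameters where $u_\alpha(t_0)=0$, i.e. where $I(\alpha)$ meets the imaginary axis, and that the sign of each change is fixed by the orientation of the crossing as in the remark; this pins $n(\alpha)$ rigidly to the winding recorded by $\mathbf{a}(\alpha)$. Once this is established, the divergence $n(\alpha)\to\infty$ supplied by Lemma \ref{Z} yields the limit at once.
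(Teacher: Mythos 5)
Your proposal is correct and follows exactly the paper's route: the paper derives this lemma directly from Remark \ref{RemarkZeroes} together with Lemma \ref{Z} (stating only that ``the remark and Lemma \ref{Z} imply'' the result), which is precisely the combination you carry out. Your write-up simply makes explicit the details the paper leaves implicit --- the choice $\varepsilon = t_0$, the bin correspondence between $n(\alpha)$ and ${\bf a}(\alpha)$, and the fact that $I(\alpha)\neq(0,0)$ by uniqueness --- all of which are consistent with the paper's setup.
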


Note that we can extend ${\bf a}$ to $(-\infty , 0)$ by ${\bf a} (-\alpha )={\bf a}( \alpha ) +\pi$ (${\bf a}$ still gives an angle between $I(\alpha )$ and the positive real axis), and we have:

\begin{lemma} $\lim_{\alpha \rightarrow -\infty} {\bf a}(\alpha ) = - \infty$.
\end{lemma}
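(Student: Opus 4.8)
The plan is to read off this limit directly from the extension formula ${\bf a}(-\alpha) = {\bf a}(\alpha) + \pi$ together with the already-established Lemma \ref{alphainfinite}, so the whole statement reduces to the behaviour of ${\bf a}$ for large positive $\alpha$. There is essentially no new analysis to do; the content lies entirely in the odd symmetry of the equation.

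First I would recall \emph{why} the extension is the correct one. Since $-u_{\alpha}$ solves (\ref{ODE}) whenever $u_{\alpha}$ does, and $-u_{\alpha}$ has initial data $(-\alpha, 0)$, uniqueness of solutions to the initial value problem forces $u_{-\alpha} = -u_{\alpha}$. Evaluating at $t_0$ gives $I(-\alpha) = -I(\alpha)$, i.e. $I(-\alpha)$ is the antipodal point of $I(\alpha)$. Two antipodal points differ in angle by $\pi$, so the only consistent continuous choice of branch is ${\bf a}(-\alpha) = {\bf a}(\alpha) + \pi$, which is exactly the extension used to define ${\bf a}$ on $(-\infty, 0)$.

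Then the limit is immediate. Writing $\beta = -\alpha$, so that $\beta \to +\infty$ as $\alpha \to -\infty$, I would compute
$${\bf a}(\alpha) = {\bf a}(-\beta) = {\bf a}(\beta) + \pi.$$
By Lemma \ref{alphainfinite} we have ${\bf a}(\beta) \to -\infty$ as $\beta \to +\infty$, hence ${\bf a}(\beta) + \pi \to -\infty$ as well, giving $\lim_{\alpha \to -\infty} {\bf a}(\alpha) = -\infty$.

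The only point requiring any care is the consistency of the branch choice in the extension—that the shift by $\pi$ (rather than an arbitrary odd multiple of $\pi$) is forced—but this is pinned down by the identity $u_{-\alpha} = -u_{\alpha}$ and the continuity of ${\bf a}$, so I do not anticipate any genuine obstacle here.
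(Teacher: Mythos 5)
Your proof is correct and is essentially the paper's own argument: the paper defines the extension ${\bf a}(-\alpha)={\bf a}(\alpha)+\pi$ and then states the lemma with no further proof, precisely because it follows immediately from that definition together with Lemma \ref{alphainfinite}, exactly as you compute. Your extra paragraph justifying the branch choice via $u_{-\alpha}=-u_{\alpha}$ is a harmless elaboration of the paper's parenthetical remark that ${\bf a}$ still measures the angle of $I(\alpha)$.
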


The problem is similar if instead of the distance to $M_1$ one would have considered the distance to $M_2$. Namely, 
if $u$ is a solution to equation (\ref{ODE}) then $u^* (t)=u(d-t)$ would be the solution of the equation obtained by
interchanging  $M_1$ and $M_2$ (the function $h$ changes to $h^* (t) =-h(d-t)$).
Then the previous observations also apply if one instead considers the initial value problem at the endpoint $d$ of the interval of the definition of equation (\ref{ODE}).
Then one can define the curve 
\begin{align*}
F:\mathbb{R}&\rightarrow\mathbb{R}^2\\
\beta&\mapsto (u_{\beta}^* (t_0),(u^*_{\beta})' (t_0)),
\end{align*}

\noindent
where $u^*_{\beta}$ is the solution of (\ref{ODE}) such that
$u_{\beta}^* (d) =\beta$ and $(u^*_{\beta} )' (d) =0$. Then $F$ is a simple curve, $F(0)=(0,0)$, $F(-\beta )=-F(\beta )$, $F(1)=(1,0)$. We can define
an argument function ${\bf b} : (0,\infty ) \rightarrow  \re$ such that ${\bf b} (1) =0$ and ${\bf b}(\beta )$ gives an angle between $F(\beta )$ and the positive 
real axis. And define ${\bf b}$ on $(-\infty ,0) $ by ${\bf b}(-\beta ) = {\bf b}(\beta ) +\pi$. The previous results translate into

\begin{lemma} $\lim_{\beta \rightarrow \infty} {\bf b}(\beta ) = + \infty$.
\end{lemma}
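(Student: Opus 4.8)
The plan is to deduce this limit from the results already established for the shooting problem at the endpoint $0$, by exploiting the symmetry $t\mapsto d-t$ that interchanges the two ends of the interval. The only genuinely new feature is an orientation reversal: passing to the reflected problem conjugates the argument function by a reflection of $\re^2$ across the real axis, and this is exactly what turns the divergence to $-\infty$ (Lemma \ref{alphainfinite}) into divergence to $+\infty$.

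Concretely, first I would introduce $w_\beta(s) := u_\beta^*(d-s)$. A direct computation, as in the derivation of $h^*(t)=-h(d-t)$, shows that $w_\beta$ solves (\ref{ODE}) with $h$ replaced by $h^*$ and with initial data $w_\beta(0)=u_\beta^*(d)=\beta$ and $w_\beta'(0)=-(u_\beta^*)'(d)=0$. Thus $w_\beta$ is precisely the left-endpoint solution for the reflected equation. The next step is to check that $h^*$ satisfies the hypotheses of Theorem \ref{zeros}: since $h$ is strictly decreasing we have $(h^*)'(t)=h'(d-t)<0$, and from $\lim_{t\to d}(t-d)h(t)=n-m_2-1$ one obtains $\lim_{t\to 0} t\,h^*(t)=n-m_2-1$, so the roles of $m_1$ and $m_2$ are swapped. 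Because $m=\min\{m_1,m_2\}\le m_2$, the subcriticality inequality (\ref{Eq: Subcriticality ODE}) needed to invoke Theorem \ref{Prop:prescribed zeroes} still holds for $q<p$, and the energy monotonicity transfers as well (the energy decreases on $(0,t_0^*)$, where $h^*>0$), so that $w_\beta$ is defined on $[0,t_0^*]$. Hence Lemma \ref{Z}, Remark \ref{RemarkZeroes} and Lemma \ref{alphainfinite} apply verbatim to $w_\beta$. Writing $t_0^*=d-t_0$ for the unique sign change of $h^*$ and ${\bf a}^*$ for the argument function $\beta\mapsto$ angle of $(w_\beta(t_0^*),w_\beta'(t_0^*))$, this yields $\lim_{\beta\to\infty}{\bf a}^*(\beta)=-\infty$.

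Finally I would translate back to $F$. From $u_\beta^*(t)=w_\beta(d-t)$ we get $u_\beta^*(t_0)=w_\beta(t_0^*)$ and $(u_\beta^*)'(t_0)=-w_\beta'(t_0^*)$, so
$$F(\beta)=\bigl(w_\beta(t_0^*),\,-w_\beta'(t_0^*)\bigr),$$
i.e. $F(\beta)$ is the reflection of $(w_\beta(t_0^*),w_\beta'(t_0^*))$ across the real axis. Reflection reverses the sign of the angle, and both ${\bf b}$ and $-{\bf a}^*$ are continuous on $(0,\infty)$ and vanish at $\beta=1$ (there $u_1^*\equiv 1$ is the constant solution, so $F(1)=(1,0)$); hence ${\bf b}(\beta)=-{\bf a}^*(\beta)$ for all $\beta>0$, and therefore $\lim_{\beta\to\infty}{\bf b}(\beta)=+\infty$.

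The main obstacle, really the only point demanding care, is the bookkeeping of orientation: one must verify that the reflection in the second coordinate (the factor $-1$ in $(u_\beta^*)'(t_0)=-w_\beta'(t_0^*)$) is the exact mechanism producing the sign change relative to Lemma \ref{alphainfinite}, and that the normalizations ${\bf a}^*(1)={\bf b}(1)=0$ are compatible, so that the identity ${\bf b}=-{\bf a}^*$ holds globally rather than merely up to an additive multiple of $2\pi$.
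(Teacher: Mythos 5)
Your proposal is correct and is essentially the paper's own argument: the paper proves this lemma by exactly the reflection $u^*(t)=u(d-t)$, $h^*(t)=-h(d-t)$, asserting that the results for the shooting problem at $0$ (Lemma \ref{Z}, Remark \ref{RemarkZeroes}, Lemma \ref{alphainfinite}) ``translate'' to the endpoint $d$, with the sign reversal coming from the reflection. Your write-up simply makes explicit the details the paper leaves implicit (the swap of $m_1$ and $m_2$, the subcriticality check via $m\le m_2$, and the identity ${\bf b}=-{\bf a}^*$ fixed by the common normalization at $\beta=1$), all of which are verified correctly.
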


\begin{lemma} $\lim_{\beta \rightarrow -\infty} {\bf b}(\beta ) = + \infty$.
\end{lemma}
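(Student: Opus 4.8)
The plan is to obtain this as the $\beta\to-\infty$ counterpart of the immediately preceding lemma $\lim_{\beta\to\infty}{\bf b}(\beta)=+\infty$, in exactly the way the second ${\bf a}$-lemma was deduced from Lemma \ref{alphainfinite}. The structural fact I would isolate first is the odd symmetry of the curve $F$. Since $-u$ solves (\ref{ODE}) whenever $u$ does, the solution with data $u^*_{-\beta}(d)=-\beta$, $(u^*_{-\beta})'(d)=0$ is simply $-u^*_{\beta}$; evaluating at $t_0$ gives $F(-\beta)=-F(\beta)$. Negating a nonzero vector in $\re^2$ shifts its angle by $\pi$, which is precisely the content of the prescribed extension ${\bf b}(-\beta)={\bf b}(\beta)+\pi$.

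With this relation in hand the deduction is immediate. Writing $\beta=-\gamma$ and letting $\gamma\to+\infty$,
$$\lim_{\beta\to-\infty}{\bf b}(\beta)=\lim_{\gamma\to+\infty}{\bf b}(-\gamma)=\lim_{\gamma\to+\infty}\big({\bf b}(\gamma)+\pi\big)=+\infty,$$
where the last equality invokes the preceding lemma. The only point to observe is that adding the bounded constant $\pi$ cannot alter an infinite limit. This is the same mechanism by which $\lim_{\alpha\to-\infty}{\bf a}(\alpha)=-\infty$ follows from Lemma \ref{alphainfinite} through ${\bf a}(-\alpha)={\bf a}(\alpha)+\pi$, so no new idea is needed here.

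Consequently, for the final statement itself there is essentially no obstacle: it is a formal consequence of the odd symmetry $F(-\beta)=-F(\beta)$ together with the $\beta\to+\infty$ lemma. The genuine work sits in that preceding lemma, which I would establish by the symmetry $u^*(t)=u(d-t)$ converting the problem into the one with $M_1$ and $M_2$ interchanged, governed by $h^*(t)=-h(d-t)$. Writing $w_\beta(t)=u^*_\beta(d-t)$, which solves this starred equation with $w_\beta(0)=\beta$, $w'_\beta(0)=0$, one checks $\lim_{t\to 0}t\,h^*(t)=n-m_2-1$, so Theorem \ref{Prop:prescribed zeroes} applies under the subcriticality $q<\frac{n-m_2+2}{n-m_2-2}$, which holds because $m\le m_2$. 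Running the starred analogues of Lemma \ref{Z} and Remark \ref{RemarkZeroes} drives the corresponding angle to $-\infty$; the reflection across the real axis caused by the sign change $(u^*_\beta)'(t_0)=-w'_\beta(d-t_0)$ is exactly what converts this into $+\infty$ for ${\bf b}$. Once that lemma is available, the one-line computation above settles the claim.
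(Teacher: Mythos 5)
Your proof is correct and is essentially the paper's own (implicit) argument: the paper simply defines ${\bf b}(-\beta)={\bf b}(\beta)+\pi$, justified by the odd symmetry $F(-\beta)=-F(\beta)$, and the lemma then follows immediately from the preceding lemma $\lim_{\beta\to\infty}{\bf b}(\beta)=+\infty$, exactly as in your computation. Your supporting sketch of that preceding lemma (the reflection $t\mapsto d-t$, the function $h^*(t)=-h(d-t)$, subcriticality via $m\le m_2$, and the sign flip from $(u^*_\beta)'(t_0)=-w_\beta'(d-t_0)$) also matches the paper's reasoning.
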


To find solutions of (\ref{ODE})  we will look for values $\alpha, \beta$ such that $I(\alpha )=F(\beta )$. Then $u_{\alpha}$ and $u^*_{\beta}$ match at $t_0$ 
and give a solution defined on the whole interval $[0,d]$. Moreover we can see that by a careful analysis of the intersections one 
can know the number of zeroes of the corresponding solution. First we point out:

\begin{lemma}\label{alpha}
${\bf a}(\alpha ) <\pi /2$ for all $\alpha > 1$. 
\end{lemma}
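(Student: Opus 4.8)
The plan is to recognize ${\bf a}(\alpha)$ as the total angular variation of the phase curve $t \mapsto (u_\alpha(t), u_\alpha'(t))$ on $[0,t_0]$ and then to exhibit a barrier at the angle $\pi/2$. First I would set, for each $\alpha>0$, the continuous lift $\theta_\alpha(t)$ of the argument of $(u_\alpha(t),u_\alpha'(t)) \in \re^2 \setminus \{(0,0)\}$ normalized by $\theta_\alpha(0)=0$; this is legitimate since $(u_\alpha(0),u_\alpha'(0)) = (\alpha,0)$ lies on the positive real axis, and the curve never passes through the origin, for if $(u_\alpha(t),u_\alpha'(t))=(0,0)$ then uniqueness forces $u_\alpha\equiv 0$, contradicting $u_\alpha(0)=\alpha>0$. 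I would then argue that ${\bf a}(\alpha)=\theta_\alpha(t_0)$ for every $\alpha>0$: both are continuous lifts (in $\alpha$) of the $S^1$-valued map $\alpha \mapsto I(\alpha)/\|I(\alpha)\|$, and they agree at $\alpha=1$ because $u_1\equiv 1$ is the constant solution, so $\theta_1\equiv 0$ and ${\bf a}(1)=0$; two continuous lifts of the same map that coincide at one point coincide everywhere.

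Next I would compute the angular velocity of the phase curve on the line $\{u=0\}$. Writing $x=u_\alpha$, $y=u_\alpha'$, equation (\ref{ODE}) becomes the planar system $\dot x = y$, $\dot y = -h y - \lambda x(|x|^{q-1}-1)$, and $\dot\theta_\alpha = (x\dot y - y\dot x)/(x^2+y^2)$. At any instant with $x=0$ and $y\neq 0$ this collapses to $\dot\theta_\alpha = -y^2/y^2 = -1 < 0$, independently of $t$, $h$, $\lambda$, $q$ and $\alpha$. In other words the trajectory always crosses the $u'$-axis clockwise, which is the elementary fact that for a second-order equation the flow moves to the right precisely where $u'>0$.

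With this barrier the conclusion is immediate. For $\alpha>1$ we have $\theta_\alpha(0)=0<\pi/2$. If $\theta_\alpha$ reached the value $\pi/2$ at some first time $t_1\in(0,t_0]$, then $\theta_\alpha \le \pi/2$ on $[0,t_1)$ would force $\dot\theta_\alpha(t_1)\ge 0$; but $\theta_\alpha(t_1)=\pi/2$ means $u_\alpha(t_1)=0$ and $u_\alpha'(t_1)>0$, whence $\dot\theta_\alpha(t_1)=-1<0$, a contradiction. Hence $\theta_\alpha(t)<\pi/2$ for all $t\in[0,t_0]$, and in particular ${\bf a}(\alpha)=\theta_\alpha(t_0)<\pi/2$.

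I expect the only delicate point to be the identification ${\bf a}(\alpha)=\theta_\alpha(t_0)$, i.e. matching the argument function ${\bf a}$, which is anchored by continuity in $\alpha$ at ${\bf a}(1)=0$, with the lift obtained by integrating along the trajectory in $t$; this rests on the non-vanishing of $I(\alpha)$ and the uniqueness of continuous lifts. Everything else is an elementary ODE-barrier computation that does not even use the sign behavior of $h$, and it in fact yields ${\bf a}(\alpha)<\pi/2$ for all $\alpha>0$. Alternatively one can bypass the lift identification and argue combinatorially via Remark \ref{RemarkZeroes}: since the number of zeroes $n(\alpha)\ge 0$ and equals $0$ at $\alpha=1$, the value ${\bf a}(\alpha)$ can never rise to $\pi/2$, as an upward crossing of the positive imaginary axis would force $n(\alpha)=-1$.
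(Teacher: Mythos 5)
Your proof is correct, but it takes a genuinely different route from the paper's. The paper argues by contradiction and by continuity in the parameter $\alpha$: setting $\alpha_0 = \min\{\alpha>1 : {\bf a}(\alpha)=\pi/2\}$, the conditions $u_{\alpha_0}(t_0)=0$, $u_{\alpha_0}'(t_0)>0$ force $u_{\alpha_0}$ to be negative just before $t_0$, while for $\alpha$ slightly below $\alpha_0$ the solutions $u_\alpha$ are positive on $[0,t_0]$ (via Remark \ref{RemarkZeroes}), contradicting $u_\alpha \to u_{\alpha_0}$. You instead work along each fixed trajectory: after identifying ${\bf a}(\alpha)$ with the time-lift $\theta_\alpha(t_0)$ (the one delicate step, which you handle correctly via non-vanishing of the phase point, uniqueness of continuous lifts, and the anchor $u_1\equiv 1$), the computation $\dot\theta_\alpha=-1$ on the axis $\{u=0\}$ gives a one-sided barrier at $\pi/2$ that no trajectory can cross upward. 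Your approach buys several things: it yields the stronger statement ${\bf a}(\alpha)<\pi/2$ for every $\alpha>0$; it uses neither the sign structure of $h$ nor continuous dependence on initial data (except in the lift identification); and the same computation $\dot\theta=-1$ is precisely what makes the informal crossing rules of Remark \ref{RemarkZeroes} rigorous, and, after the time reversal $t\mapsto d-t$ (which reflects the phase plane across the horizontal axis and so flips the barrier to $-\pi/2$), it delivers Lemma \ref{beta} as well. What the paper's argument buys is brevity: with Remark \ref{RemarkZeroes} and continuity of the flow in $\alpha$ already in place, the contradiction takes three lines. Your closing ``combinatorial'' alternative (an upward crossing of the positive imaginary axis would force $n(\alpha)=-1$) is the variant closest in spirit to the paper, since it, like the paper's proof, leans on Remark \ref{RemarkZeroes} together with continuity in $\alpha$.
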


\begin{proof} Assume that  the lemma is not true. Consider $\alpha_0 = \min \{ \alpha >1 : {\bf a} (\alpha ) = \pi /2 \}$.
Then we have $u_{\alpha_0 } (t_0 )=0$, $u_{\alpha_0 }' (t_0 ) > 0$. Then $u_{\alpha_0}$ must be
negative before $t_0$. But for $\alpha <\alpha_0$ $u_{\alpha}$ is positive in $[0,t_0 ]$ and this is a contradiction
since $\lim_{\alpha \rightarrow \alpha_0} u_{\alpha} = u_{\alpha_0}$.
\end{proof}

Similarly we have:

\begin{lemma}\label{beta}
${\bf b}(\beta ) > - \pi /2$ for all $\beta >1$. 
\end{lemma}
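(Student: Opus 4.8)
The plan is to deduce Lemma \ref{beta} from Lemma \ref{alpha} by using the reflection $t\mapsto d-t$ that already underlies the passage from the curve $I$ to the curve $F$. The guiding observation is that the problem posed at the endpoint $d$, whose shooting data define $F$, is nothing but the problem posed at the endpoint $0$ for the reflected equation; consequently no new shooting analysis is needed and the statement will follow by sign bookkeeping from the already-established Lemma \ref{alpha}.

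First I would introduce $v_\beta(s):=u^*_\beta(d-s)$. A direct substitution shows that $v_\beta$ solves $v''+\widetilde h\,v'+\lambda v(|v|^{q-1}-1)=0$ with $\widetilde h(s)=-h(d-s)$ and $v_\beta(0)=\beta$, $v_\beta'(0)=0$. Since $h$ is strictly decreasing and $\lim_{t\to0}th(t)=n-m_1-1$, $\lim_{t\to d}(t-d)h(t)=n-m_2-1$, the function $\widetilde h$ is again strictly decreasing and satisfies $\lim_{s\to0}s\widetilde h(s)=n-m_2-1$, $\lim_{s\to d}(s-d)\widetilde h(s)=n-m_1-1$; that is, $\widetilde h$ obeys the hypotheses of Theorem \ref{zeros} with $m_1$ and $m_2$ interchanged (and the same $m=\min\{m_1,m_2\}$). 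Thus $v_\beta$ is precisely the solution playing the role of ``$u_\beta$'' for the reflected equation, and the entire construction of this section, through Lemma \ref{alpha}, applies to it verbatim.

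Next I would match the two shooting curves. Because $\widetilde h(s)=0$ exactly when $h(d-s)=0$, the reflected problem changes sign at $s_0:=d-t_0$, which plays the role of $t_0$; its analogue of $I$ is $\widetilde I(\beta)=(v_\beta(s_0),v_\beta'(s_0))$. From $v_\beta(s)=u^*_\beta(d-s)$ one gets $v_\beta(s_0)=u^*_\beta(t_0)$ and $v_\beta'(s_0)=-(u^*_\beta)'(t_0)$, so $\widetilde I(\beta)=(u^*_\beta(t_0),-(u^*_\beta)'(t_0))$ is the mirror image of $F(\beta)=(u^*_\beta(t_0),(u^*_\beta)'(t_0))$ across the real axis. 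Writing points as complex numbers, this says $\widetilde I(\beta)=\overline{F(\beta)}$, so a continuous argument of $\widetilde I$ is the negative of a continuous argument of $F$; since $F(1)=\widetilde I(1)=(1,0)$ fixes both normalizations by ${\bf b}(1)=0$ and $\widetilde{{\bf a}}(1)=0$, the argument function $\widetilde{{\bf a}}$ of the reflected problem satisfies ${\bf b}(\beta)=-\widetilde{{\bf a}}(\beta)$ for all $\beta$.

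Finally I would apply Lemma \ref{alpha} to the reflected problem: it yields $\widetilde{{\bf a}}(\beta)<\pi/2$ for every $\beta>1$, whence ${\bf b}(\beta)=-\widetilde{{\bf a}}(\beta)>-\pi/2$ for all $\beta>1$, as claimed. The main obstacle is purely bookkeeping: tracking the two sign changes consistently, namely the one from $v'_\beta=-(u^*_\beta)'$ produced by the reflection and the one from the fact that conjugation $z\mapsto\overline z$ sends a continuously chosen argument $\theta$ to $-\theta$; once these are pinned down against the normalization ${\bf b}(1)=0$, the conclusion is immediate. As an alternative I could mirror the contradiction argument of Lemma \ref{alpha} directly: if ${\bf b}(\beta)=-\pi/2$ for some $\beta>1$, letting $\beta_0=\min\{\beta>1:{\bf b}(\beta)=-\pi/2\}$ forces $u^*_{\beta_0}(t_0)=0$ and $(u^*_{\beta_0})'(t_0)<0$, hence $u^*_{\beta_0}<0$ just to the right of $t_0$, which contradicts $u^*_\beta>0$ on $[t_0,d]$ for $\beta\in(1,\beta_0)$ together with $\lim_{\beta\to\beta_0}u^*_\beta=u^*_{\beta_0}$.
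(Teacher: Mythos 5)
Your proposal is correct and is essentially the paper's own approach: the paper defines $F$ and ${\bf b}$ precisely through the reflection $u^*(t)=u(d-t)$, $h^*(t)=-h(d-t)$, notes that ``the previous observations also apply'' to the problem shot from the endpoint $d$, and then states Lemma \ref{beta} as following ``similarly'' from the argument of Lemma \ref{alpha} --- which is exactly the symmetry bookkeeping you carry out, with your conjugation identity $\widetilde I(\beta)=\overline{F(\beta)}$ making the word ``similarly'' precise. Your alternative at the end (the mirrored minimal-$\beta_0$ contradiction argument) is precisely the proof the paper intends, so both routes you give coincide with the paper's reasoning.
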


Then we consider  the simple curves
\begin{align*}
R:[1,\infty)&\rightarrow\mathbb{R}^2\\
\alpha &\mapsto ({\bf a}(\alpha ),|I(\alpha )|),\\
S:[1,\infty)&\rightarrow\mathbb{R}^2\\
\beta &\mapsto({\bf b}(\beta ),|F(\beta )|) :
\end{align*}

\noindent
they only can intersect at points $\alpha, \beta \geq1$ such that ${\bf a} (\alpha )={\bf b}(\beta )\in(-\pi/2,\pi/2).$ \medskip\\

Points of intersection of $I$ and $F$ give solutions to (\ref{ODE}). To understand the number of zeroes of such solutions
it will be useful to study the curves $R$ and $S$. We first note:

\begin{lemma}\label{positive}
If for some $\alpha , \beta >1$ we have that $R(\alpha )=S(\beta )$ then $I(\alpha ) =F(\beta )$ and the corresponding solution $u_{\alpha}=u^*_{\beta}$ is positive.
\end{lemma}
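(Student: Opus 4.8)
The plan is to separate the statement into two parts: first that $R(\alpha) = S(\beta)$ forces the planar matching $I(\alpha) = F(\beta)$, and then that the resulting glued solution is positive, the latter being a direct consequence of the angular confinement already recorded in Lemmas \ref{alpha} and \ref{beta} together with Remark \ref{RemarkZeroes}.

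First I would unpack $R(\alpha)=S(\beta)$ into its two scalar components, namely $\mathbf{a}(\alpha)=\mathbf{b}(\beta)$ and $|I(\alpha)|=|F(\beta)|$. Since $\mathbf{a}(\alpha)$ and $\mathbf{b}(\beta)$ are continuous lifts of the arguments of the respective vectors, their equality as real numbers forces equality of the arguments modulo $2\pi$. A point of $\mathbb{R}^2\setminus\{(0,0)\}$ is determined by its modulus together with its argument modulo $2\pi$, so equal moduli and equal (lifted) arguments give $I(\alpha)=F(\beta)$. Thus $u_\alpha$ and $u^*_\beta$ share the same value and the same derivative at $t_0$, and by uniqueness of the initial value problem posed at $t_0$ they are restrictions of a single solution $u$ of (\ref{ODE}) on $[0,d]$ with $u'(0)=u'(d)=0$; by construction $u=u_\alpha$ on $[0,t_0]$ and $u=u^*_\beta$ on $[t_0,d]$.

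For positivity I would first pin down the common angle. By Lemma \ref{alpha} we have $\mathbf{a}(\alpha)<\pi/2$, and by Lemma \ref{beta} we have $\mathbf{b}(\beta)>-\pi/2$; since these are equal, the common value $\theta:=\mathbf{a}(\alpha)=\mathbf{b}(\beta)$ lies in $(-\pi/2,\pi/2)$. Hence the first coordinate of $I(\alpha)=F(\beta)$, which equals $|I(\alpha)|\cos\theta$, is strictly positive, so $u(t_0)=u_\alpha(t_0)=u^*_\beta(t_0)>0$. Now I invoke Remark \ref{RemarkZeroes}: $\theta\in(-\pi/2,\pi/2)$ means $n(\alpha)=0$, so $u_\alpha$ has no zero in $(0,t_0)$; together with $u_\alpha(0)=\alpha>1>0$ and $u_\alpha(t_0)>0$ this yields $u_\alpha>0$ on $[0,t_0]$. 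Applying the mirror version of the remark to the problem posed at the endpoint $d$, whose argument function is $\mathbf{b}$, the same confinement $\mathbf{b}(\beta)\in(-\pi/2,\pi/2)$ gives that $u^*_\beta$ has no zero in $(t_0,d)$, hence $u^*_\beta>0$ on $[t_0,d]$. Gluing the two pieces along $t_0$ shows $u>0$ on all of $[0,d]$.

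The only delicate point is the bookkeeping in the first part: one must be sure that equality of the lifted arguments genuinely identifies the two planar vectors rather than two vectors differing by a full turn. This is exactly where the normalizations $\mathbf{a}(1)=\mathbf{b}(1)=0$ and the continuity of the lifts are used, but it reduces to the injectivity of polar coordinates once the lifts agree as real numbers. Everything else is a direct application of the previously established Lemmas \ref{alpha}, \ref{beta} and Remark \ref{RemarkZeroes}, so I anticipate no substantive analytic obstruction.
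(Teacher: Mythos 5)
Your proof is correct and follows essentially the same route as the paper's: the paper also deduces $I(\alpha)=F(\beta)$ directly from equality of the lifted angles and moduli (dismissing it as ``clear''), and then combines Lemmas \ref{alpha} and \ref{beta} to place the common angle in $(-\pi/2,\pi/2)$, invoking Remark \ref{RemarkZeroes} on each half-interval to get positivity. Your write-up merely fills in the polar-coordinate bookkeeping and the gluing-by-uniqueness step that the paper leaves implicit.
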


\begin{proof} The first statement is clear. Since ${\bf a}(\alpha ) > -\pi /2$ we have that $u_{\alpha}$ is positive
in $[0,t_0 ]$ (see Remark \ref{RemarkZeroes}). And since ${\bf b} (\beta ) < \pi /2$ we have that $u^*_{\beta}$ is positive in $[t_0 , d]$.

\end{proof}

With regards to nodal solutions we have:

\begin{lemma}\label{nodal} Let $k$ be a positive integer. 
If $R$ and $S-(2\pi k, 0)$ intersect, then the intersection gives a solution with exactly $2k$ zeroes.

If $R$ and $S-((2k-1)\pi , 0)$ intersect, then the intersection gives a solution with exactly $2k-1$ zeroes.
\end{lemma}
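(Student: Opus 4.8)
The plan is to convert each prescribed intersection into a matched global solution and then read off its number of zeros from a winding argument in the phase plane.

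First I would unpack the intersection conditions. An intersection of $R$ and $S-(2\pi k,0)$ at parameters $\alpha,\beta\geq 1$ means ${\bf a}(\alpha)={\bf b}(\beta)-2\pi k$ together with $|I(\alpha)|=|F(\beta)|$. Since the two angles differ by an integer multiple of $2\pi$, the points $I(\alpha)$ and $F(\beta)$ point in the same direction, and with equal modulus they coincide: $I(\alpha)=F(\beta)$. Hence $u_{\alpha}(t_0)=u^*_{\beta}(t_0)$ and $u'_{\alpha}(t_0)=(u^*_{\beta})'(t_0)$, so by uniqueness of solutions of (\ref{ODE}) the functions $u_{\alpha}$ on $[0,t_0]$ and $u^*_{\beta}$ on $[t_0,d]$ patch into a single solution $u$ on $[0,d]$ with $u'(0)=u'(d)=0$ and $u(d)=\beta>0$. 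For the odd shift the angles differ by $(2k-1)\pi$, so $I(\alpha)$ and $F(\beta)$ are antipodal; using $F(-\beta)=-F(\beta)$ this reads $I(\alpha)=F(-\beta)$, and the same patching yields a global solution $u$ with $u(d)=-\beta<0$.

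Next I would count zeros via a continuous phase angle. Writing $z(t)=u(t)+i\,u'(t)$ and letting $\theta(t)$ be a continuous determination of $\arg z(t)$ on $[0,d]$ with $\theta(0)=0$ (legitimate since $z(0)=\alpha>0$ and, by uniqueness, $z$ never vanishes), a zero of $u$ in $(0,d)$ occurs exactly when $\theta(t)\equiv \pi/2 \pmod{\pi}$. At such a point $u=0$, $u'\neq 0$, and a direct computation gives $\theta'(t)=(u\,u''-(u')^2)/|z|^2=-1<0$, so $\theta$ crosses every imaginary-axis value only downward. Consequently the number of zeros of $u$ in $(0,d)$ equals the number of values $\equiv \pi/2\pmod{\pi}$ lying in the open interval $(\theta(d),0)$, and it remains only to compute $\theta(d)$.

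To pin down $\theta(d)$ I would identify ${\bf a}$ and ${\bf b}$ with $t$-wise phase angles. By continuity in the shooting parameter and the normalizations ${\bf a}(1)={\bf b}(1)=0$, one gets ${\bf a}(\alpha)=\theta(t_0)$, while ${\bf b}(\beta)$ equals the continuous determination $\phi(t_0)$ of $\arg\!\big(u^*_{\beta}+i(u^*_{\beta})'\big)$ normalized by $\phi(d)=0$. On $[t_0,d]$ both $\theta$ and $\phi$ describe the argument of the same trajectory, so $\theta-\phi$ is a constant multiple of $2\pi$; evaluating at $t_0$ and using ${\bf a}(\alpha)-{\bf b}(\beta)=-2\pi k$ forces $\theta\equiv\phi-2\pi k$, whence $\theta(d)=\phi(d)-2\pi k=-2\pi k$. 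The values $-\pi/2,-3\pi/2,\dots,-(4k-1)\pi/2$ in $(-2\pi k,0)$ number exactly $2k$, giving $2k$ zeros. In the odd case the matched right piece is $u^*_{-\beta}$, so $\phi(d)=\pi$ and $\phi(t_0)={\bf b}(-\beta)={\bf b}(\beta)+\pi$; combined with ${\bf a}(\alpha)-{\bf b}(\beta)=-(2k-1)\pi$ this gives $\theta(d)=\pi-2\pi k=-(2k-1)\pi$, and the values $\equiv\pi/2\pmod\pi$ in $(-(2k-1)\pi,0)$ number exactly $2k-1$.

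The main obstacle I anticipate is the bookkeeping of the previous paragraph: one must verify carefully that the argument functions ${\bf a},{\bf b}$, built as functions of the shooting parameters, agree with the $t$-wise phase angles $\theta(t_0),\phi(t_0)$ (two continuous determinations of the same angle agreeing at $\alpha=\beta=1$ must agree everywhere), and that the rotation is strictly clockwise precisely at the imaginary-axis crossings, so that no crossing is double-counted or cancelled by a return. Once the clockwise-only property and the identity $\theta(d)\in\{-2\pi k,\,-(2k-1)\pi\}$ are established, the count is immediate; the boundary cases where the matched point lies on the imaginary axis (so $u(t_0)=0$) are absorbed automatically, since $t_0$ is then a single interior zero counted consistently by the same interval of angles.
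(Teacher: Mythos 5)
Your proposal is correct, and its skeleton is the same as the paper's: you unpack an intersection of $R$ with the shifted curve into $I(\alpha)=F(\beta)$ (even shift) or $I(\alpha)=F(-\beta)$ (odd shift), patch $u_{\alpha}$ and $u^*_{\pm\beta}$ at $t_0$ by uniqueness, and then read the zero count off the angle functions. Where you genuinely differ is in how the count is justified. The paper's proof delegates it to Remark \ref{RemarkZeroes}: the number of zeros of $u_{\alpha}$ on $[0,t_0]$ is ``determined by'' ${\bf a}(\alpha)$ because $n(\alpha)$ jumps by $\pm 1$ as the curve $\alpha\mapsto I(\alpha)$ crosses the imaginary axis; that remark is asserted informally (``It is easy to see''), and the additivity of the two counts across $t_0$ is likewise left to the reader. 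You instead run a Pr\"ufer-type phase argument along the solution itself: since $\theta'=(uu''-(u')^2)/|z|^2$ equals $-1$ wherever $u=0$, the lifted angle crosses the levels $\pi/2+\pi\mathbb{Z}$ only downward, each exactly once, so the zeros in $(0,d)$ are precisely the levels in $(\theta(d),0)$; combining this with the identification ${\bf a}(\alpha)=\theta(t_0)$ and ${\bf b}(\beta)=\phi(t_0)$ (two continuous determinations of the same angle that agree at the constant solution $\alpha=\beta=1$, hence everywhere) yields $\theta(d)=-2\pi k$, respectively $-(2k-1)\pi$, and the count of $2k$, respectively $2k-1$, follows by inspection. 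What your route buys is rigor and self-containment: it effectively proves the paper's Remark \ref{RemarkZeroes} rather than citing it, it makes the additivity across $t_0$ automatic (the two normalized lifts of the same trajectory on $[t_0,d]$ differ by a constant in $2\pi\mathbb{Z}$), and it absorbs the degenerate case $u(t_0)=0$ without case analysis. The only step you should spell out is the continuity of $\alpha\mapsto\theta_{\alpha}(t_0)$, which follows from the $C^1$ continuous dependence on $\alpha$ quoted in Section 3 together with continuity of lifts of non-vanishing plane curves; with that in place the ``agree at one point, hence agree everywhere'' argument is complete.
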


\begin{proof} Note that if for some  $\alpha , \beta >1$ we have that $R(\alpha )=S(\beta ) - (2k\pi , 0)$ then
$I(\alpha ) =F(\beta )$.  If $R(\alpha )=S(\beta ) - ((2k-1) \pi , 0)$ then
$I(\alpha ) =F(-\beta )$. In both cases we obtain a solution of equation (\ref{ODE}).

If $\alpha , \beta >1 $ are such that $R(\alpha ) =S(\beta )-(2k \pi ,0)$ (for some positive integer $k$) let
 $u=u_{\alpha} =u^*_{\beta} $  be the corresponding solution to (\ref{ODE}). The number of zeroes of $u$ in $[0,d]$
is given  by the number of zeroes of $u_{\alpha} $ restricted to $[0,t_0 ]$ plus the number of zeroes of $u_{\beta}^*$ restricted to $(t_0 , d]$.
The number of zeroes of $u_{\alpha} $ restricted to $[0,t_0 ]$ is determined by ${\bf a}(\alpha )$ while 
 the number of zeroes of $u_{\beta}^*$ restricted to $(t_0 , d]$ is determined by ${\bf b}(\beta )$. Since ${\bf a}(\alpha ) - 
{\bf b}(\beta ) =-2k\pi$ 
it follows as in Remark \ref{RemarkZeroes} (applied to ${\bf a}$ and ${\bf b}$)  that the total number of zeroes is exactly $2k$. 

If $R(\alpha )=S(\beta ) - ((2k-1) \pi , 0)$ let $u=u_{\alpha} =u^*_{-\beta}$  be the corresponding solution to (\ref{ODE}). The number of zeroes of $u$ in $[0,d]$
is given  by the number of zeroes of $u_{\alpha} $ restricted to $[0,t_0 ]$ plus the number of zeroes of $u_{-\beta}^*$ restricted to $(t_0 , d]$, which is of course equal to the number of zeroes of $u_{\beta}^* = -u_{-\beta}^*$ restricted to $(t_0 , d]$.
The number of zeroes of $u_{\alpha} $ restricted to $[0,t_0 ]$ is determined by ${\bf a}(\alpha )$ while 
 the number of zeroes of $u_{\beta}^*$ restricted to $(t_0 , d]$ is determined by ${\bf b}(\beta ) $. Since ${\bf a}(\alpha ) - 
{\bf b}(\beta ) =-(2k-1) \pi$ 
it follows as in Remark \ref{RemarkZeroes} that the sum is exactly $2k -1$. 
 
\end{proof}

Now for each $t\leq 0$ we consider the exit times 

$$e(t)=max\{ \alpha \in (0,\infty ) /\quad {\bf a}(\alpha )= t  \},$$

\noindent
which are well defined by Lemma  \ref{alphainfinite}. We will be particularly interested in $e_k =e(-k\pi )$. Let $C_k =\| I(e_k )\|$. 

If the constant solution $u=1$ is the only positive solution of  (\ref{ODE}), the curves $R$ and $S$ do not intersect 
by Lemma \ref{positive}, and together form a simple curve. 
If (\ref{ODE})  admits non-constant  positive solutions we can pick $\alpha^* ,\beta^*  > 1$ such that 
$R(\alpha^* )=S(\beta^* )$ and $R(\alpha )\neq S(\beta )$ for all $(\alpha ,\beta )\in( \alpha^* ,\infty)\times [1,\infty)$: this is well defined by Lemma \ref{alphainfinite} and Lemma \ref{beta} .\\
Henceforth the curve $$T:=R|_{[\alpha^* ,\infty)}\cup S|_{[ \beta^* ,\infty)} =R^* \cup S^*$$
is simple. Moreover, it follows that $T$ divides the half-plane $\mathbb{R}\times\mathbb{R}^+$ into two simply connected, disjoint regions $U$ and $D$ where the real axis is in the
closure of $D$. 

\begin{remark}\label{RemarkFinal} Consider the natural orientation of the curve $R^* = R|_{[ \alpha^* ,\infty)}$ and a tubular neighborhood $W$ around it. Note that $W - R^*$ has two connected components one to the right of $R^*$ (according to the natural orientation of the curve) and one to the left. It should be clear that the component to the right is contained in $U$ and the
component to the left is contained in $D$. In the next lines we will in particular prove this statement. Note that it cannot happen that
both components are contained in $U$ ( or $D$). For any $t\leq -\pi$ consider the vertical half-line $V_t =\{ t \} \times [0, \infty )$.
The intersection of $R^*$ with $V_t$ is a finite collection of points and closed intervals. We let $R(e(t))=(t,C(t) )$ be the
exit point of $R^*$ at $t$. Note that  $R^*$ can {\it touch} a  point in $V_t$ but remain in the same
side of $V_t$ before and after the intersection, or it can {\it cross} $V_t$. Suppose that $R^*$
crosses $V_t$ at $(t,a)$ in the direction of $-\infty$: i. e. for some $\alpha_1 >\alpha^*$ we have that $R(\alpha_1 )=(t, a)$ while
$R(\alpha_1  -\varepsilon) = (t',a')$ with $t' <t$ and $R(\alpha_1  +\varepsilon) = (t'',a'')$ with $t'' >t$. Assume $R^*$ crosses again  $V_t$ is at $(t,b)$, $R(\alpha_2 )= (t,b)$. Then  $R|_{[ \alpha_1,\alpha_2 ]}$ together with the segment $\{t \} \times [a,b]$ (or $\{t \} \times [a,b]$) is a closed curve $\gamma$. Note that $C(t) \notin [a,b]$ because
the curve $R|_{[e(t) ,\infty)}$ cannot intersect $\gamma$ and Lemma 3.4. It is then easy to see  that
$R^*$ crosses $V_t$ an even number of times under $C(t)$ and even number of times over $C(t)$. Since points
close to the real axis are in $D$ it is easy to see from this the statement mentioned at the beginning of the remark. It is
also easy to see that if $c\in (0, C(t) )$ and $(t,c) \in U$ then $(t,c)$ is in the vertical segment of a closed curve $\gamma$ as above.
And if $c>C(t)$ and $(t,c) \in D$ then again $(t,c)$ is in the vertical segment of a closed curve as above. This observation will
be used in the lemma below. 
\end{remark}

\begin{lemma}\label{monotone} The sequence $C_k = \| I(e_k ) \| $ is monotone.
\end{lemma}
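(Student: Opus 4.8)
The plan is to transfer the statement into the phase picture of Remark \ref{RemarkFinal} and to rule out any interior strict local extremum of the sequence $(C_k)$, which is equivalent to monotonicity. First I would record the basic geometry of the objects involved. Since $e_k=e(-k\pi)$ satisfies $\mathbf{a}(e_k)=-k\pi$, the vector $I(e_k)$ lies on the real axis, so that $u'_{e_k}(t_0)=0$ and $C_k=\|I(e_k)\|=|u_{e_k}(t_0)|$. Moreover the exit times are strictly increasing, $e_1<e_2<\cdots$, because for $\alpha>e_k$ one has $\mathbf{a}(\alpha)<-k\pi$: otherwise $\mathbf{a}$ could not tend to $-\infty$ as in Lemma \ref{alphainfinite}. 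Hence the exit points $P_k:=R(e_k)=(-k\pi,C_k)$ are met along $R^*$ in order of increasing index, each $P_k$ being the last crossing of the vertical line $V_{-k\pi}$. From the orientation statement of Remark \ref{RemarkFinal} (right side of $R^*$ in $U$, left side in $D$) I would extract the local dichotomy at an exit point: for small $\delta>0$ the point $(-k\pi,C_k-\delta)$ lies in $D$ and $(-k\pi,C_k+\delta)$ lies in $U$.

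Suppose, for contradiction, that $(C_k)$ has a strict local maximum at some index $k$, so $C_{k-1}<C_k$ and $C_{k+1}<C_k$ (the local–minimum case is handled symmetrically, interchanging the roles of $U$ and $D$, using the point just above $P_k$ and the companion statement of Remark \ref{RemarkFinal} about points of $U$ below $C(t)$). Fix $\delta>0$ smaller than $C_k-C_{k-1}$ and $C_k-C_{k+1}$, and consider $Q=(-k\pi,C_k-\delta)\in D$. On each neighboring line $V_{-(k\pm 1)\pi}$ the point at the same height $C_k-\delta$ lies strictly above the corresponding exit point, so by Remark \ref{RemarkFinal} any such point lying in $D$ is caught on the vertical segment of a closed curve $\gamma=R|_{[\alpha_1,\alpha_2]}\cup(\{t\}\times[a,b])$ bounded by two crossings of $R^*$. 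Following the arc of $R^*$ that runs from $P_{k-1}$ through $P_k$ to $P_{k+1}$, and tracking how it enters and leaves the strip $-(k+1)\pi<x<-k\pi$, I would assemble from these sub–arcs and vertical segments a single closed Jordan curve and argue that the local maximum forces $R^*$ to cross some line $V_t$ once below and once above height $C(t)$, producing crossings $a<C(t)<b$ that bound a closed curve $\gamma$. This is precisely what Remark \ref{RemarkFinal} forbids, since there one shows $C(t)\notin[a,b]$ for every such $\gamma$. The contradiction excludes a strict local maximum, and the symmetric argument excludes a strict local minimum, so $(C_k)$ is monotone.

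The main obstacle is the bookkeeping of the intersections of $R^*$ with the vertical lines. The clean statement ``below the exit point one is in $D$, above it in $U$'' is only valid up to the trapped pockets of Remark \ref{RemarkFinal}, and $R^*$ may meet a given $V_t$ many times, possibly non-transversally. The heart of the argument is therefore to show rigorously that a local extremum of $(C_k)$ cannot be absorbed by such pockets but must create a genuine pair of crossings straddling the exit height, i.e. a closed curve $\gamma$ with $C(t)\in(a,b)$; once that is established the conclusion is immediate from $C(t)\notin[a,b]$. Handling the non-transversal touches, the nesting of the pockets, and pinning down on which line $V_t$ the straddling pair actually appears, is where the care is needed.
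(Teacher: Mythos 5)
Your proposal rests on two pillars, and both are defective. First, the reduction ``no strict interior local extremum $\Leftrightarrow$ monotone'' is false for sequences with plateaus: the sequence $1,2,2,1$ has no index $k$ with $C_{k-1}<C_k>C_{k+1}$ nor with $C_{k-1}>C_k<C_{k+1}$, yet it is not monotone. Repairing this forces you to work with weak extrema, and then the strict gaps $\delta<C_k-C_{k\pm 1}$ on which your construction depends are no longer available. Second, and much more seriously, the step you describe as ``assemble from these sub-arcs and vertical segments a single closed Jordan curve and argue that the local maximum forces $R^*$ to cross some line $V_t$ once below and once above height $C(t)$'' \emph{is} the whole content of the lemma in your formulation, and you do not prove it --- you explicitly defer it (``is where the care is needed''). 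Nothing in Remark \ref{RemarkFinal} hands you that straddling pair: the pockets described there are precisely the mechanism by which a putative local extremum could fail to produce one, and you give no argument that they cannot absorb it. In addition, you invoke the pocket statement of Remark \ref{RemarkFinal} at the points $(-(k\pm 1)\pi,\,C_k-\delta)$, but that statement only applies to points known to lie in $D$ (respectively $U$), and you never establish which region those points are in; a horizontal path from $Q$ to them may well cross $T$.

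For contrast, the paper's proof avoids every one of these issues by a global translation trick rather than a local extremum analysis. Since $I$ is injective and ${\bf a}(-\alpha)={\bf a}(\alpha)+\pi$, a coincidence $R(\alpha)=R(\alpha')-(\pi,0)$ would force $I(\alpha)=I(-\alpha')$, impossible for $\alpha,\alpha'\geq 1$; hence $R$ and $R-(\pi,0)$ are disjoint. By Lemmas \ref{alpha} and \ref{beta} the curve $R-(\pi,0)$ is also disjoint from $S$ (its first coordinate is $<-\pi/2$ while that of $S$ is $>-\pi/2$), hence disjoint from $T=R^*\cup S^*$. Being connected, $R-(\pi,0)$ therefore lies \emph{entirely} in $U$ or entirely in $D$. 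Its last intersection with $V_{-k\pi}$ is the point $(-k\pi,C_{k-1})$, after which it never returns to $V_{-k\pi}$ and never meets $R$, so this point cannot sit on the vertical segment of any pocket curve $\gamma$; the last part of Remark \ref{RemarkFinal} then gives $C_k\leq C_{k-1}$ for \emph{every} $k$ when $R-(\pi,0)\subset U$, and $C_k\geq C_{k-1}$ for every $k$ when $R-(\pi,0)\subset D$. One dichotomy settles all indices at once, with a uniform direction of monotonicity, no case analysis on extrema, and no plateau problem. If you want to keep your route, you must actually supply the missing straddling-crossings argument; the translated-curve argument is how the paper circumvents it entirely.
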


\begin{proof}The curves $R$ and $R-(\pi ,0)$ are disjoint. Also $R-(\pi ,0)$ and $S$ are disjoint by Lemmas \ref{alpha} and \ref{beta}.
It follows that $R-(\pi ,0)$ and $T$ are disjoint. Therefore $R-(\pi ,0) \subset U $  or  $R-(\pi ,0) \subset D$. 
Note that the exit point of $R-(\pi ,0)$ at $-k\pi$ is $(-k\pi , C_{k-1} )$. 
If $R-(\pi ,0) \subset U $ we have that  $(-k\pi , C_{k-1} ) \in U$. Note also that after touching $(-k\pi , C_{k-1} )$ 
$R-(\pi ,0 )$ goes to $-\infty$ without intersecting $V_{-k\pi}$ again, so $(-k\pi , C_{k-1} )$ cannot be on 
a close curve $\gamma$ as in the previous remark.  But then the last part of the remark  implies that $C_k
\leq C_{k-1}$.  In the same way we prove that if  $R-(\pi ,0) \subset D$ then $C_k
\geq C_{k-1}$ and this proves the lemma.

\end{proof}

A similar result can be obtained in the same way for the sequences $C_k^*$ of exit levels of the curve $S$ at $k\pi$. 

The case when either $C_k$ or $C_k^*$  are decreasing can be solved in exactly the same way as was done for the particular
functions $h$ appearing in the 
case of the sphere in \cite[Lemma 4.7 and Appendix A]{JC}. We only give a brief description of the idea of the proof  for completeness. We  refer to
\cite{JC} for details.

\begin{lemma}
If the sequence $C_k$ is decreasing then for each $k$ there exists a solution of (\ref{ODE}) which has exactly $k$ zeroes,
all of them in $(0,t_0 )$. If $C_k^* $ is decreasing then for each $k$ there exists a solution of (\ref{ODE}) with exactly $k$ zeroes,
all of them in $(t_0 , d)$.

\end{lemma}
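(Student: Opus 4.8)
The plan is to produce, for each $k$, a global solution of (\ref{ODE}) by the same double-shooting/matching scheme already set up, but now \emph{locating} the intersection so that all $k$ nodal points fall to the left of $t_0$. First I would reduce to the stated case: the substitution $t\mapsto d-t$ interchanges $M_1$ and $M_2$, sends $h$ to $h^\ast$, and carries the whole $R$-analysis into the $S$-analysis, so the assertion for decreasing $C_k^\ast$ (zeros in $(t_0,d)$) is literally the $t\mapsto d-t$ image of the assertion for decreasing $C_k$ (zeros in $(0,t_0)$); hence it suffices to treat $C_k$ decreasing.

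A solution with exactly $k$ zeros, all in $(0,t_0)$, is obtained by gluing at $t_0$ a left solution $u_\alpha$ having exactly $k$ zeros in $(0,t_0)$ to a \emph{zero-free} right solution $u^\ast_\beta$ (or $u^\ast_{-\beta}$). In the $(\text{angle},\text{modulus})$ picture this is, by the mechanism of Lemma \ref{nodal}, an intersection of $R$ with the horizontal translate $S-(k\pi,0)$, subject to the extra requirement that it occur along the arc of $S$ on which $u^\ast_\beta$ has no zeros. Concretely, let $\Sigma$ be the sub-arc of $S$ issuing from $S(1)=(0,1)$ (the constant solution $u\equiv1$) on which $\mathbf{b}\in(-\pi/2,\pi/2)$, so that $u^\ast_\beta$ stays of one sign on $(t_0,d)$; I would work with $\Sigma_k:=\Sigma-(k\pi,0)$, an arc beginning at $(-k\pi,1)$ inside the strip $|\mathbf{a}+k\pi|<\pi/2$.

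The core step is to force $\Sigma_k$ to meet $R$. For this I would invoke the Jordan-curve decomposition of Remark \ref{RemarkFinal}: the simple curve $T=R^\ast\cup S^\ast$ separates the open upper half-plane into $U$ and $D$, and, away from the finitely many closed sub-loops controlled there, a point at angle $-k\pi$ lies in $D$ below the exit level $C_k$ and in $U$ above it. Using that the sequence $C_k$ is strictly monotone (Lemma \ref{monotone})---here decreasing, so $C_1>C_2>\cdots$---I would show that the two ends of $\Sigma_k$ land in different components; since for $k\ge1$ the part of $T$ meeting the strip $|\mathbf{a}+k\pi|<\pi/2$ consists of $R^\ast$ alone (because $S^\ast$ lives in $\mathbf{b}>-\pi/2$ by Lemma \ref{beta}), the arc $\Sigma_k$ must then cross $R$, and that crossing is the desired matching point. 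At such a point $\mathbf{a}(\alpha)$ lies in the $k$-zero strip, so $u_\alpha$ has exactly $k$ zeros in $(0,t_0)$, while $\mathbf{b}(\beta)\in(-\pi/2,\pi/2)$ forces $u^\ast_\beta$ to have none in $(t_0,d)$; the glued function solves (\ref{ODE}) with $u'(0)=u'(d)=0$ and exactly $k$ zeros, all in $(0,t_0)$. Whether one glues $u^\ast_\beta$ or $u^\ast_{-\beta}$ is dictated by the parity of $k$, exactly as in Lemma \ref{nodal}.

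I expect this core step---turning the monotonicity of $C_k$ into a guaranteed transversal crossing---to be the real obstacle. Because $R$ and $S$ may oscillate and touch a vertical line in many points and whole intervals, one cannot fix the side-assignment of the endpoints of $\Sigma_k$ by a na\"ive comparison of heights; it requires the even-crossing bookkeeping of Remark \ref{RemarkFinal} together with the strict nesting of the levels $C_k$ to rule out tangential ``touch-but-do-not-cross'' configurations and to keep the assignment consistent for every $k$ simultaneously. This is precisely the analysis carried out for the round sphere in \cite[Lemma 4.7 and Appendix A]{JC}; since it uses only $h'<0$ and the endpoint asymptotics of $h$, it transfers verbatim to the present $h$, which is why we content ourselves with this outline.
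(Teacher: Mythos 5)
Your proposal takes a genuinely different route from the paper, and it has a real gap exactly at the step you yourself flag as ``the real obstacle.'' The paper does not prove this lemma by intersecting $R$ with translates of $S$ at all. Its proof is a one-parameter shooting argument in $\alpha$ alone, driven by the energy $E$: first one shows $C_0\leq 1$ (hence $C_k<1$ for $k\geq 1$), since otherwise the solutions $u_\alpha$ for $\alpha$ between $e_0$ and the next parameter $e^+$ with $\mathbf{a}(e^+)=-\pi/2$ would all have an interior positive local minimum, while $E(u_{e^+})$ attains its minimum at $t_0$ where it is positive --- impossible because $E<0$ at a positive local minimum. Then, for fixed $k$, the solution $u_{e_k}$ has $k$ zeroes in $(0,t_0)$ and an extremum of absolute value $C_k<1$ at $t_0$, hence negative energy there, hence the configuration ``$k$ zeroes followed by two consecutive local extrema''; the set of $\alpha$ with this configuration is open and does not contain the next parameter $e^+$ with $\mathbf{a}(e^+)=-k\pi-\pi/2$; at the first $\alpha>e_k$ where the configuration is lost, the last extremum lands at $t=d$, so $u_\alpha'(d)=0$, and since $E$ is negative at $d$ and increasing on $[t_0,d]$ while $E\geq 0$ at any zero, all $k$ zeroes lie in $(0,t_0)$. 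None of the objects $R$, $S$, $T$, $U$, $D$ enter this argument.

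Your route instead hinges on the unproven claim that the two ends of $\Sigma_k$ lie in different components of the complement of $T$. Half of it is available: in the decreasing case the proof of Lemma \ref{monotone} gives $R-(\pi,0)\subset U$, and iterating as in the proof of Theorem \ref{zeros} gives $R-k(\pi,0)\subset U$, so the endpoint $(-k\pi,1)=R(1)-(k\pi,0)$ lies in $U$. But nothing locates the other endpoint $(\pi/2-k\pi,\,|F(\beta_1)|)$: its height is a constant independent of $k$, and the monotonicity of the $C_k$ --- which concerns exit levels at the angles $-k\pi$ only --- gives no control on $R$ near the angles $\pi/2-k\pi$; a priori both endpoints could lie in $U$. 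This is not a technicality that more bookkeeping fixes: the solutions the lemma asserts (the ones the paper's argument produces) end with a positive local minimum, or negative local maximum, at $t=d$, which by the equation forces $|u(d)|\leq 1$; hence they correspond to matchings $I(\alpha)=F(\beta)$ with $|\beta|\leq 1$, i.e. off the domain $[1,\infty)$ of $S$ and hence off every arc $\Sigma_k$ (and in any case $\{\beta\geq 1:\mathbf{b}(\beta)\in(-\pi/2,\pi/2)\}$ need not be connected, so an intersection, if one exists, need not lie on the arc issuing from $(0,1)$). So the intersection you are trying to force may simply fail to exist. Finally, your fallback citation does not close the gap: \cite[Lemma 4.7 and Appendix A]{JC} carries out precisely the energy/configuration argument sketched above --- which is why the paper can defer to it --- and contains no endpoint-separation analysis of the curves $R$ and $S$, so nothing there ``transfers verbatim'' to your core step.
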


\begin{proof} One only needs to  prove the first statement, the second statement can be proved in the same way.  We first note
that $C_0 \leq 1$: if $C_0 >1$ then $u_{e_0} $ would have a local maximum at $t_0$ and therefore it must have a local minimum at some $t_1 <t_0$, and be increasing in $[t_1 , t_0 ]$. Let $e^+$ is the first time after 
$e_0$ when ${\bf a} (e^+ ) = -\pi /2$. Note that for $\alpha \in (e_0 , e^+ )$ ${\bf a} (\alpha ) \in (-\pi /2 ,0)$: this
means that  the solution $u_{\alpha}$ verifies that $u_{\alpha }(t_0 )>0$ and $u'_{\alpha }(t_0 )<0$. This (and the information
on $u_{e_0}$) implies that
$\alpha \in (e_0 , e^+ )$ the solution $u_{\alpha}$ has a local minimum and a local maximum in $(0,t_0 )$. But this implies
that $u_{e^+}$ would also have a local minimum in $(0,t_0 )$. But on the other hand, $u_{e^+} (t_0 )=0$  and therefore
its energy is positive at $t_0$. Since the energy is decreasing in $[0,t_0 ]$ this implies that it is positive in the interval.
But the energy is negative at a (positive) local minimum. This is a contradiction and therefore we have that $C_0 \leq 1$.
Then
we have that $C_i  <1 $ for all $i\geq 1$. Now, for $k\geq 1$ fixed, the solution $w_{e_k}$ must have exactly $k$ zeroes in
$(0,a_0 )$ and a local extrema at $a_0$ with value $C_k$ or $-C_k$. It follows that there must be other local extrema
between the last zero and $t_0$. For instance if $u_{e_k} (t_0 ) = C_k$ then $t_0$ is a positive local minimum of 
$u_{e_k}$ and it must have a local maximum before it. Note that the energy at $t_0$ is negative. 
Now we consider the values of $\alpha$ such that $u_{\alpha}$ has {\it the same configuration} (i. e. has $k$ zeroes and then
two consecutive local extrema).
As before consider 
$e^+$,  the first time after $e_k$ when the curve $I$ touches the $y$-axis.
Note that ${\bf a} (e^+ ) = {\bf a}  (e_k ) -\pi /2  =-k\pi - \pi /2$.
The solution $u_{e^+ }$ has exactly
$k$ zeroes in $(0,t_0 )$ and another one at $t_0$. Since the minimum of the energy $E(u_{e^+})$ is attained at $t_0$,
this means that the energy is positive in the whole interval $[0,d]$,
and therefore  $u_{e^+ }$ cannot have the previous configuration.  The values of $\alpha$ which have the configuration are open 
and by elementary argument one can see that at the first point when the configuration is lost one must have the last
extrema to move to $d$. The corresponding solution has exactly $k$ zeroes. And since the energy at $d$ is negative
and the energy is increasing in the interval $[t_0 , d]$ then it must be negative in the interval. Since the energy is positive at a
zero it follows then that all the zeroes must lie in
$(0,t_0 )$.

\end{proof}

With all the preliminary results obtained we are now in position to prove our  principal result: 

\begin{proof}(Theorem 2.2) It follows from the previous lemmas that if one of  the sequences $C_k$ or $C_k^*$ is not  
increasing then the theorem holds. So let us assume that both sequences are increasing. Write as before 
$T=R^* \cup S^*$. From Lemma \ref{monotone} (and its proof) we
have  that 
$R-(\pi ,0) \subset D$. And iterating the procedure we see that for any positive integer $k$, $R-k(\pi ,0) \subset D$.
A similar argument implies that  $S + k(\pi ,0) \subset D$. This implies that $S-k(\pi , 0)$ intersects $U$, the points in 
$S-k(\pi , 0)$ next to $+\infty$ are in $U$.
But this is saying that $T- k(\pi , 0 )$ has points in 
$U$ and points in $D$. It follows that  $T- k(\pi , 0 )$ must intersect $T$. Since $R^*  -k(\pi , 0)$ cannot intersect $T$ and
$S^* -k(\pi ,0 )$ cannot intersect $S$ it follows that $S^* -(k\pi , 0)$ must intersect $R^*$. Then Theorem 2.2 follows
from Lemma \ref{nodal}.
\end{proof}

\end{document}